\def\mid{\vert}
\newcommand{\N}{\mathbb N}
\newcommand{\Z}{\mathbb Z}
\newcommand{\R}{\mathbb R}
\newcommand{\E}{\mathbb E}
\renewcommand{\P}{\mathbb P}
\newcommand{\F}{\mathcal{F}}
\newcommand{\G}{\mathcal{G}}
\newcommand{\PP}{\mathscr{P}}
\newtheorem{thmm}{Theorem}[section]
\newtheorem{lem}[thmm]{Lemma}
\newtheorem{prop}[thmm]{Proposition}
\newtheorem{cor}[thmm]{Corollary}
\begin{document}
\begin{frontmatter}

\title{The genealogy of a solvable population model under selection
with dynamics related to directed polymers}
\runtitle{The genealogy of a solvable population model under selection}

\begin{aug}
\author{\inits{A.}\fnms{Aser}~\snm{Cortines}\corref{}\ead[label=e1]{cortines@math.univ-paris-diderot.fr}}
\address{Universit\'e Paris Diderot, Math\'ematiques, case 7012, F-75
205 Paris Cedex 13, France.\\ \printead{e1}}
\runauthor{A. Cortines}
\end{aug}

%
\received{\smonth{6} \syear{2014}}
%
\revised{\smonth{3} \syear{2015}}

%
\begin{abstract}
We consider a stochastic model describing a constant size $N$
population that may be seen as a directed polymer in random medium with
$N$ sites in the transverse direction. The population dynamics is
governed by a noisy traveling wave equation describing the evolution of
the individual fitnesses. We show that under suitable conditions the
generations are independent and the model is characterized by an
extended Wright--Fisher model, in which the individual $i$ has a random
fitness $\eta_i$ and the joint distribution of offspring $ (\nu_1,
\ldots, \nu_N  )$ is given by a multinomial law with $N$ trials and
probability outcomes $\eta_i$'s. We then show that the average
coalescence times scales like $\log N$ and that the limit genealogical
trees are governed by the Bolthausen--Sznitman coalescent, which
validates the predictions by Brunet, Derrida, Mueller and Munier for
this class of models. We also study the extended Wright--Fisher model,
and show that, under certain conditions on $\eta_i$, the limit may be
Kingman's coalescent, a coalescent with multiple collisions, or a
coalescent with simultaneous multiple collisions.
\end{abstract}

%
\begin{keyword}
\kwd{ancestral processes}
\kwd{Bolthausen--Sznitman coalescent}
\kwd{coalescence}
\kwd{travelling waves}
\end{keyword}
\end{frontmatter}

\section{Introduction}\label{sec_intro_colescent}

An important question in the study of populations evolution is to
understand the effect of selection and mutation on its genealogy. For a
given population, we would like to know how individuals are related and
how many generations do we have to go back in time in order to find a
common ancestor. Kingman \cite{Kingman1982} was one of the first to
give a mathematical formulation for this problem and study the
ancestral history of a population. He showed that in the absence of
selection (neutral models) the populations genealogical structure
satisfies universal features; \textit{see also} \cite
{Mohle1999,Mohle2000,Mohle2001}.

In this paper, we focus on the evolution of a fixed size population
model with $N$ individuals subjected to the effects of mutation and
selection. We assign to each individual a real number, which represents
the fitness of this individual. This fitness is transmitted to the
offspring, up to variations due to mutations. Individuals with large
fitness spawn a considerable fraction of the population, whereas the
children of low fitness individuals tend to be eliminated. Therefore,
these population models are sometimes referred to as ``rapidly
adapting.'' If we consider the evolution of the fitnesses along the real
axis, it is simply a stochastic model of front propagation. The
selection mechanism constrains the particles to stay together. Since
individuals with large fitness quickly overrun the whole population,
the front is essentially pulled by the leading edge. These models are
then related to noisy traveling wave equations of the
Fisher--Kolmogorov--Petrovsky--Piscounov (FKPP) type \cite
{Brunet2013,Brunet2006,Brunet2007}.

Recent results suggest that in rapidly adapting population models the
genealogical correlations between individuals have universal features.
It is conjectured \cite{Brunet2013,Brunet2006,Brunet2007} that the
genealogical trees of these populations converge to the
Bolthausen--Sznitman coalescent and that the average coalescence times
scales like the logarithmic of the populations size. The conjectures
contrast with classical results in neutral population models, such as
Wright--Fisher and Moran models. It is known that in neutral population
models the genealogical trees converge to those of a Kingman's
coalescent and that the average coalescence times scales like $N$, the
size of the population~\cite
{Kingman1982,Mohle1999,Mohle2000,Mohle2001}. In Section~\ref{sec.coalescent.processes}, we will give a general introduction and
present some relevant results about coalescent processes.

We now mention some models, for which the conjectures have been proved.
The ``exponential model'' \cite{Brunet2013,Brunet2006,Brunet2007} is an
example of constant size population dynamics, for which a complete
mathematical treatment is possible. Each individual $i$ in generation
$t$ carries a value $x_i(t)$, which represents the fitness. The
offspring of the individuals are generated by independent Poisson point
process of densities $e^{-y+x_i(t)} \,\mathrm{d}y$. One then selects the
$N$ right-most individuals to form the next generation $t+1$. The
authors show that, after rescaling time by a factor $\log N$, one
obtains the convergence to the Bolthausen--Sznitman coalescent.
Berestycki, Berestycki and Schweinsberg \cite{Berestycki2013}
consider a system of particles, performing branching Brownian motion
with negative drift and killed upon reaching zero. The authors choose
the appropriate drift such that the model is in the near-critical
regime and the initial population size $N$ is roughly preserved. They
show that the expected time to observe a merge is of order $(\log N)^3$
and that the genealogy of the particles is also governed by the
Bolthausen--Sznitman coalescent.

We also mention other related models, for which the genealogical trees
do not converge to those of a Kingman's coalescent. Schweinsberg \cite
{Schweinsberg2003} considers a model, in which the numbers of offspring
for the individuals are i.i.d. (Galton--Watson processes), but in each
generation only $N$ of the offspring are chosen at random for survival
(selection mechanism). He proves that depending on the tail
probabilities of the reproductive law, the limit may be Kingman's
coalescent, a coalescent with multiple collisions ($\Lambda
$-coalescent), or a coalescent with simultaneous multiple collisions
($\Xi$-coalescent). The authors in \cite{Huillet2013} study the
asymptotic of the extended Moran model as the total population size $N$
diverges, and show that the ancestral process of the population may be
approximated by a coalescent process with multiple collisions. Discrete
population models with unequal (skewed) fertilities, such as the skewed
Wright--Fisher model and the Kimura model, are not necessarily in the
domain of attraction of the Kingman's coalescent \cite{Huillet2011}.

In the present paper, we consider a population dynamics derived from
the following model of front propagation \cite{Brunet2004}. It consists
in a constant number $N$ of evolving particles on the real line
initially at positions $X_1(0), \ldots,X_N(0)$. Then, given the
positions $X_i(t)$ of the $N$ particles at time $t \in\N$, we define
the positions at time $t + 1$ by
%
\begin{equation}
\label{definition.X.derrida.brunet.coalescent} X_j (t + 1) : = \max_{1\leq i \leq N} \bigl
\{ X_i(t) + \xi_{ij} (t + 1) \bigr\},
\end{equation}
where $ \{ \xi_{ij} (s);  1 \leq i, j \leq N,   s \in\N \} $
are i.i.d. real random variables. The model can be seen as a directed
polymer in random medium at zero temperature. The lattice consists in
$L$ planes in the transversal direction. In every plane, there are $N$
points that are connected to all points of the previous plane and the
next one and for each edge $ij$, connecting the planes $t$ and $t+1$, a
random energy $-\xi_{ij}(t+1)$ is sampled from a common probability
distribution. At zero temperature, the directed polymer chooses the
path which minimizes its energy (the optimal path) and $-X_j(L)$ is
equal to minimal energy among all paths connecting the origin to the
$j$th point on the $L$th plane~\cite{Cook1990,Cortines2013}. The
optimal path starting at the same point but arriving at different
points gives rise to a tree structure. It is well known that population
dynamics in presence of selection may be related to directed polymers
in random medium at zero temperature and it is expected that they
belong to the same universality class \cite
{Brunet2013,Brunet2008,Brunet2006,Brunet2007}.

If the distribution of $\xi_{ij}(t+1)$ in (\ref
{definition.X.derrida.brunet.coalescent}) has no atoms, \textit{that is,}
for every $x \in\R$ the probability $\P(\xi_{ij}(t+1) = x) = 0$, then
for all $j$ the following equation has a.s. a unique solution $i$:
%
\begin{equation}
X_j(t+1) = X_{i}(t)+\xi_{i j}(t+1).
\end{equation}
In this sense, we may say that $X_j(t+1)$ is an offspring or a
descendant of $X_{i}(t)$ and denote by $\nu_{i}(t)$ the number of
descendants of $X_{i}(t)$ in generation $t+1$. The fitnesses of the
individuals are given by their positions $X_1(t), \ldots, X_N(t)$ and
conditionally on
\[
\F_t := \sigma\bigl\{ \xi_{ij}(s) \mbox{ and }
X_i(0); 0 \leq s \leq t, 1 \leq i,j \leq N \bigr\}, %
\]
the probability that $X_j(t+1)$ descends from $X_{i}(t)$ is given by
%
\begin{equation}
\label{equa:general.fitness.coef} \eta_{i} (t) := \P \bigl( \xi_{i j}(t+1)
+X_{i }(t) \geq\xi_{kj}(t+1) +X_k(t) ; \mbox{
for every } 1 \leq k \leq N | \F_t \bigr).
\end{equation}
Since $ \{ \xi_{ij} (t+1) ;   1 \leq i, j \leq N  \} $ are
independent, it is easy to see that, for $j_1, \ldots, j_m$ distinct
and $i_1, \ldots, i_m $ (not necessarily distinct),
\begin{eqnarray*}
&&\P \bigl( X_{j_k}(t+1) \mbox{ descends from } X_{i_k}(t),
\mbox{ for } 1 \leq k \leq m | \F_t \bigr)
\\
&&\quad = \eta_{i_1} (t) \eta_{i_2} (t) \cdots
\eta_{i_m} (t).
\end{eqnarray*}
If $i_k = i_l$, the individuals $j_k$ and $j_l$ have a common ancestor
in generation $t$. As a consequence, given $\F_t$ the offspring vector
$ \nu(t) :=  (\nu_1(t), \ldots,\nu_N (t)  )$ is distributed
according to a $N$-class multinomial with $N$ trials and probabilities
outcomes $\eta(t) :=  (\eta_1(t), \ldots,\eta_N (t)  )$.

We analyse the genealogical tree of the population by observing the
ancestral partition process, that is, we sample without replacement
$n \ll N$ individuals from a given generation $T$, say $e_1, \ldots,
e_n$ and for $0 \leq t \leq T$ we consider $\Pi_{t}^{N,n}$ the random
partition of $[n]:= \{1,\ldots,n\}$ such that $i$ and $j$ belong to the
same equivalent class if and only if $e_i$ and $e_j$ share the same
ancestor at time $T-t$. It is very important to realize that the
direction of time for the ancestral process is the opposite of the
direction of time for the ``natural'' evolution of the population.

If $\xi_{ij}$ in (\ref{definition.X.derrida.brunet.coalescent}) is
Gumbel $G(\rho, \beta)$-distributed, \textit{that is,}
\[
\P(\xi_{ij} \leq x) = \exp \bigl(-e^{-\beta(x-\rho)} \bigr),\qquad  x \in\R,
\]
the microscopic dynamics can be solved allowing precise calculations;
see also \cite{Brunet2004} where Brunet and Derrida use a similar
technique to compute the exact asymptotic for the speed of the front.
In this case, \emph{see Proposition~\textup{\ref
{prop.indep.offspring.generations.gumbel}} in Section~\textup{\ref{sec.derrida}}}, the positions of the particles in generation $t+1$ can
be obtained by a $\F_t$-measurable function $\Phi (X(t) )$ (that
may be interpreted as the front position at time $t$) and a $\F
_t$-independent family of i.i.d. random variables $ ( \mathcal{E}_i
(t+1)  )_{1 \leq i \leq N}$
%
\begin{equation}
X_i(t+1) = \rho + \Phi \bigl(X(t) \bigr) - \beta^{-1}
\log\mathcal{E}_i (t+1).
\end{equation}
Hence, one only needs the information $ \Phi (X(t) )$ from $\F_t$
to generate the particle position $X_i(t+1)$. In this case, one obtains
the following weak limit for the ancestral partition process.

\begin{thmm}\label{thmm.genealogy.Gumbel.case} Assume that $\xi_{ij}$
in (\ref{definition.X.derrida.brunet.coalescent}) are Gumbel $G(\rho,
\beta)$-distributed and that the initial position of particles $
(X_1(0), \ldots, X_N(0) )$ are distributed according to a
probability distribution $\mu$ on $\R^{N}$. Choose $n$ particles $e_1,
\ldots, e_n$ uniformly at random from the $N$ particles in generation
$ \lfloor T (\log N)  \rfloor$. Let\vspace*{1pt} $ (\Pi^{N,n}_{\lfloor t
(\log N) \rfloor} ; t \in [0,T  [  )$ be the random
partition of $[n]$ such that $i$ and $j$ are in the same block if and
only if $e_i$ and $e_j$ have the same ancestor in generation $
\lfloor(T-t) (\log N)  \rfloor$.

Then the processes $ (\Pi^{N,n}_{\lfloor t (\log N) \rfloor} ; t
\in [0, T  [  )$ converge weakly as $N \to\infty$ to a
continuous time process $ (\Pi^{\infty,n}_t ; t \in [0,T  [
 )$ that has the same law as the restriction to $[n]$ of the
Bolthausen--Sznitman coalescent (up to time $T^-$).
\end{thmm}

The exponential model \cite{Brunet2013,Brunet2006} and the population
dynamics in (\ref{definition.X.derrida.brunet.coalescent}) share the
common property that the only information one needs from generation $t$
in order to obtain generation $t+1$ is contained on a single function
of the particles positions at time $t$. Using this property and
shifting the particles positions appropriately, one can prove, for
example, the independence between generations. Yet, it is important to
point out that the techniques used to prove independence and the
population models are different. In the exponential model, each
individual has infinitely many offspring, but only the $N$ right-most
are selected to form the next generation. On the other hand, in (\ref
{definition.X.derrida.brunet.coalescent}) each individual has only $N$
offspring and the selection mechanism is of a different nature. Indeed,
we may label the offspring of $X_i(t)$ according to the $\xi
_{ij}(t+1)$'s, so the child labeled $j \in\{1, \ldots, N\}$ is at
position $X_i(t) + \xi_{ij} (t+1)$. The selection is then made among
individuals having the same label: $X_j(t+1) = \max_{1\leq i \leq N} \{
X_i(t) +\xi_{ij} (t+1) \}$, and in generation $t+1$ we keep the
right-most individual of each label $j$ and not the $N$ right-most
individuals, as in the exponential model. Hence, Theorem~\ref
{thmm.genealogy.Gumbel.case} provides an other example of population
dynamics in the presence of selection (or directed polymer in random
medium) that validates the conjectures in \cite
{Brunet2013,Brunet2008,Brunet2006,Brunet2007}.

The population dynamics obtained from (\ref
{definition.X.derrida.brunet.coalescent}) can be described as follows.
The individuals in generation $t$ have a (random) genetic fitness $\eta
_i(t)$, that determines their average reproductive success. The total
genetic fitness is a.s. constant and equal to one, $\sum\eta_i(t) =
1$, then given $\eta(t)$ the offspring vectors $ (\nu_1(t),\ldots,
\nu_N(t) )$ are distributed according to a $N$-class multinomial
with $N$ trials and probabilities outcomes $\eta_i(t)$'s. If we assume
that the offspring vectors $ (\nu(t)  )_{t \in\N}$ are
identically distributed and independent from generation to generation,
then we obtain a ``toy model,'' in which generations are not correlated.
In this paper, we also study the ancestral history of this population.
We make two additional assumptions on the fitness $\eta(t)$. First, we
assume that each $\eta_i(t)$ is of the form
%
\begin{equation}
\label{equa.def.eta} \eta_i(t) = Y_i(t) \Big/ \sum
_{j=1}^N Y_j (t),
\end{equation}
where $Y_j(t)$ are i.i.d. positive random variables. Secondly, for some
of our results, we assume that the tail distribution of $Y_i(t)$ satisfies
%
\begin{equation}
\label{equa.cond.Y.alpha} \lim_{y \to\infty}\P \bigl(Y_i(t) \geq y
\bigr) / y^{-\alpha} = C,
\end{equation}
where $\alpha$ and $C$ are positive constants. To simplify the
notation, the time parameter $t$ is often omitted. Moreover, $\eta
_i(t)$ in (\ref{equa.def.eta}) does not change if we replace $Y_j(t)$
by $Y_j(t)C^{ -1/\alpha}$, for this reason we may always assume that
$C=1$. Then we show that the ancestral processes converge weakly and
that the limit distribution depends on $\alpha$. Our result resembles
Theorem~4 in \cite{Schweinsberg2003}, where Schweinsberg studies
coalescent processes obtained from supercritical Galton--Watson processes.

\begin{thmm}\label{thmm.general.criteria.independent.case} Consider the
dynamics of a constant size $N$ population with infinitely many
generations backward in time defined by the vectors $\nu(t) =  (\nu
_1(t),\ldots, \nu_N(t) ),   t \in\Z$ of family sizes and denote by
$\Pi^{N,n}_t$ the ancestral partition process. Suppose that the family
sizes $\nu(t)$ are i.i.d. copies of $\nu$ a doubly stochastic
multinomial random variable with $N$ trials and probability outcomes
$\eta=  (\eta_1,\ldots, \eta_N ) $:
\[
\P \bigl( \nu= (i_1, \ldots, i_N ) | \eta \bigr) =
\frac
{N!}{i_1! \cdots i_N!} \eta_1^{i_1} \cdots \eta_N^{i_N},
\]
where $i_1,\ldots, i_N \in\N$ and $i_1+ \cdots+ i_N =N$. Suppose also
that $\eta_i$ is of the form (\ref{equa.def.eta}) with i.i.d. $Y_i$'s.
Then the following holds.
\begin{longlist}[(a)] \label
{thmm.general.criteria.independent.case;square.integrable}
\item[(a)] If $\E[Y_1^2] < \infty$ (in particular, if (\ref
{equa.cond.Y.alpha}) holds and $\alpha>2$), then the processes $ (\Pi
^{N,n}_{\lfloor t / c_N \rfloor} ; t \geq0  )$ converge weakly as
$N \to\infty$ to the Kingman's n-coalescent. The scaling factor $c_N$
is asymptotically equivalent to $N$, precisely
\[
\lim_{N\to\infty} N c_N = \frac{\E[Y_i^2]}{\E[Y_i]^2}. %
\]
\item[(b)] If the $Y_i$'s satisfy (\ref{equa.cond.Y.alpha}) with $\alpha
=2$, then the processes $ (\Pi^{N,n}_{\lfloor t /c_N \rfloor} ; t
\geq0  )$ converge in the Skorokhod sense as $N \to\infty$ to
the Kingman's n-coalescent. The scaling factor $c_N$ is asymptotically
equivalent to $N/\log N $
\[
\lim_{N\to\infty} \frac{N c_N }{\log N} = \frac{2}{\E[Y_i]^2}. %
\]
\item[(c)] When (\ref{equa.cond.Y.alpha}) holds with $1 \leq\alpha< 2$,
then the processes $ (\Pi^{N,n}_{\lfloor t /c_N \rfloor} ; t \geq
0 )$ converge in the Skorokhod sense as $N \to\infty$ to a
continuous-time process $ (\Pi^{\infty,n}_t ; t \geq0  )$
that has the same law as the restriction to $[n]$ of the $\Lambda
$-coalescent, where $\Lambda$ is the probability measure associated
with the $\operatorname{Beta}(2 - \alpha; \alpha)$ distribution. The transition rates
are given by
%
\begin{equation}
\lambda_{b; k} = \frac{ B(k - \alpha; b - k + \alpha)}{B(2 - \alpha;
\alpha)},
\end{equation}
where $B(c,d)= \Gamma(c) \Gamma(d)/ \Gamma(c+d)$ is the beta function.
The scaling factor $c_N$ satisfies
\begin{eqnarray*}
\lim_{N\to\infty} N^{\alpha-1} c_N &=&
\frac{ \alpha\Gamma(\alpha)
\Gamma(2-\alpha)}{\E[Y_i]^{\alpha}}\qquad \mbox{if } 1<\alpha<2,
\\
\lim_{N\to\infty} c_N \log N &=& 1\qquad \mbox{if } \alpha=1.
\end{eqnarray*}
\item[(d)]
When (\ref{equa.cond.Y.alpha})
holds with $0 < \alpha< 1$, then the processes $ (\Pi^{N,n}_{ t }
; t \in\N )$ converge as $N \to\infty$ to a discrete-time
Markov chain $ (\Pi^{\infty,n}_t ; t \in\N )$ that has the
same law as the restriction to $[n]$ of a discrete-time $\Xi_\alpha
$-coalescent. The transition probabilities are given by
%
\begin{equation}
p_{b; b_1;\ldots; b_a ; s} = \frac{\alpha^{a+s-1}(a + s - 1)!}{(b -
1)!} \cdot\prod_{i=1}^{a}
\frac{\Gamma(b_i - \alpha)}{\Gamma(1-\alpha)}.
\end{equation}
\end{longlist}
\end{thmm}

Despite the similarities between Theorem~\ref
{thmm.general.criteria.independent.case} and Theorem~4 in \cite
{Schweinsberg2003}, we consider a population dynamics that is different
from the one studied by Schweinsberg. In \cite{Schweinsberg2003}, each
individual gives birth to $\zeta_i(t) \in\N$ children, but only $N$
among the $\zeta_1(t)+ \cdots+\zeta_N(t)$ survive. The survivors are
chosen uniformly without replacement and $\nu_i(t)$ is the number of
descendants that remain after the selection step. The distribution of
$ (\nu_1(t), \ldots, \nu_N(t) )$ is then characterized by an urn
model. Indeed, if $\zeta_i(t),  1\leq i \leq N$ is the number of balls
in the urn which are labeled $i$, so $\nu_i$ is the number of $i$-balls
sampled after $N$ draws without replacement. If we suppose that the
$Y_i$ in Theorem~\ref{thmm.general.criteria.independent.case} are
integer valued, we may also compare the population dynamics with an urn
model. In this case, though, $\nu_i$ is the number of $i$-balls sampled
after $N$ draws with replacement. Then $ (\nu_1(t), \ldots, \nu
_N(t) )$ is distributed according to a multinomial with $N$ trials
and probability outcomes $Y_i(t)  /  (Y_1(t)+ \cdots+Y_N(t)
)$ and we are under the hypotheses of Theorem~\ref
{thmm.general.criteria.independent.case}. On the other hand, the $Y_i$
are not necessarily integer valued and may be distributed according to
any distribution satisfying (\ref{equa.cond.Y.alpha}). In fact, as the
reader will see in the proof of Theorem~\ref
{thmm.genealogy.Gumbel.case}, a~relevant case is when the $Y_i$ are
distributed according to the inverse of an exponential distribution.
Whereas in \cite{Schweinsberg2003}, $\zeta_i(t)$ must be an integer
valued random variable, since it represents the number of offspring of
the $i$th individual in generation $t$.

The paper is organized as follows: in Section~\ref{sec.coalescent.processes}, we recall some necessary definitions and
results about coalescent processes. Then, in Section~\ref{sec.derrida},
we study the case where the disorder $\xi_{ij}$ is Gumbel distributed
and we obtain Theorem~\ref{thmm.genealogy.Gumbel.case} as an
application of Theorem~\ref{thmm.general.criteria.independent.case},
that will be proved later in Section~\ref{sec.proof.theorem}. At the
end of the paper, we include two Appendices, in which we prove some
technical results.

\section{Coalescent processes} \label{sec.coalescent.processes}

Let $\PP_n$ be the finite set of all partitions of $[n]$ and $\PP_\infty
$ the set of partitions of $\N^*$. For $\pi, \pi' \in\PP_n$ we say
that $\pi'$ is a refinement of $\pi$ if every equivalent class of $\pi$
is either a union of several equivalence classes of $\pi'$ or coincides
with an equivalence class of $\pi'$, we denote it by $\pi' \subset\pi$.

We call a $\PP_n$-valued process $ (\Pi^{n}_t; t\geq0  )$ a
$n$-coalescent if it has right-continuous step function paths and if
$\Pi^n_s$ is a refinement of $\Pi^n_t$, whenever $s \leq t$. We call a
$\PP_\infty$-valued process $ (\Pi_t; t\geq0  )$ a coalescent
if it has c\`adl\`ag paths and if $\Pi_s$ is a refinement of $\Pi_t$
for all $s < t$. In this paper, we use the notation $\Pi^{N,\cdot}$ to
denote the ancestral partition of a constant size population with $N$
individuals, while the notation $\Pi^{\infty,\cdot}$, or simply $\Pi$,
stands for a coalescent process.

We denote by $\mathcal{D} ([0,\infty);\PP_n )$ the space of c\`
adl\`ag functions on $[0,\infty)$ taking values in $\PP_n$, obviously
$ (\Pi^{n}_t; t\geq0  ) \in\mathcal{D} ([0,\infty);\PP_n
) $. Since $\PP_n$ endowed with the discrete metric is a separable
complete metric space, the space $\mathcal{D} ([0,\infty);\PP_n
)$ is also separable and complete in the Skorokhod distance. We say
that a process converges in the Skorokhod sense if the distribution of
the process converges weakly in $\mathcal{D} ([0,\infty);\PP_n )$
equipped with this metric.

\subsection{\texorpdfstring{$\Lambda$}{Lambda}-coalescent}

In \cite{Pitman1999}, Pitman studied the so-called $\Lambda
$-coalescent. It consists in ``coalescents with multiple collisions''
that are continuous time Markov chains taking value in $\PP_\infty$.
$\Lambda$-coalescents have the property that the rate at which blocks
are merging does not depend on the size of the blocks nor on the
integers that are in the blocks, moreover simultaneous collisions do
not happen. Let $\lambda_{b,k}$ be the rate that $k$ blocks merge into
a single one when there are $b$ blocks in total. The array $ (\lambda
_{b,k} )_{2 \leq k \leq b }$ determines the distribution of $\Pi
^n$'s and, consequently, the distribution of $\Pi$. As Pitman shows in
\cite{Pitman1999}, there exists a coalescent process with transition
rates $\lambda_{b,k}$ if and only if the consistency condition
\[
\lambda_{b,k} = \lambda_{b+1,k}+ \lambda_{b+1,k+1}
\]
holds. In this case, there exists a non-negative and finite measure on
the Borel subsets of $[0,1]$ such that
\[
\lambda_{b,k} = \int_{[0,1]} u^{k-2}
(1-u)^{b-k} \Lambda(\mathrm{d} u). %
\]
The process is then called the $\Lambda$-coalescent. When $\Lambda$ is
a unit mass at zero, we obtain the Kingman's coalescent. Another
notorious case is when $\Lambda$ is the uniform distribution on
$[0,1]$; this process was studied by Bolthausen and Sznitman in \cite
{Bolthausen1998} and is named after the authors.

One can further generalize these processes and obtain $\PP_\infty
$-Markov processes that may undergo ``simultaneous multiple
collisions,'' the $\Xi$-coalescent, \textit{see M\"ohle and Sagitov \cite
{Mohle2001} and Schweinsberg \cite{Schweinsberg2000}}. Let $b, b_1,
\ldots, b_a, s$ be non-negative integers such that $b_1\geq\cdots\geq
b_a \geq2$ and $b= s + \sum b_i$. Then, $\Xi$-coalescent are $\PP
_\infty$-Markov processes characterized by the rates $\lambda_{b; b_1,
\ldots, b_a; s } $ at which $b$ blocks merge into $a+s$ blocks, with
$s$ blocks that remain unchanged and $a$ blocks that are obtained by
the union of $b_1, \ldots, b_a$ blocks before the merging. As M\"ohle
and Sagitov observe in Lemma~3.3 of \cite{Mohle2001} (see also
Schweinsberg \cite{Schweinsberg2000}) the transition rates satisfy the
following recursion:
%
\begin{equation}
\label{equa.recursion.lambda} \lambda_{b; b_1, \ldots, b_a ; s +1} = \lambda_{b; b_1, \ldots, b_a ; s
} - \sum
_{j=1}^{a} \lambda_{b +1 ; b_1, \ldots, b_j +1 , \ldots, b_a ;
s }- s
\lambda_{b+1; b_1, \ldots, b_a, 2 ; s-1 }.
\end{equation}
Hence, the distribution of a $\Xi$-coalescent is completely determined
by the rates $\lambda_{b; b_1, \ldots, b_a}$.

\subsection{Weak convergence of ancestral processes}\label
{subsec.weak.conv.ancestral}

It is well known that coalescent processes may be obtained as the weak
limit of ancestral processes \cite{Mohle1999,Mohle2000,Mohle2001}. M\"
olhe and Sagitov study a wide class of constant size population models,
which have ``been living forever'' (so we may trace back the individuals
genealogical tree indefinitely). They obtain general conditions under
which the ancestral processes $\Pi_{t}^{N,\cdot}$ converge in the
Skorokhod sense to a coalescent process. As usual denote by $\nu_i(t)$
the number of children of the $i$th individual in generation $t$
\[
\nu_1(t) + \nu_2(t) + \cdots+ \nu_N(t) =
N,\qquad t \in\Z. %
\]
They assume that generations do not overlap and that the family sizes
in different generations are i.i.d. Generally, it is also assumed that
individuals in a given generation have the same propensity to reproduce:
\begin{longlist}[(ii)]
\item[(i)]\label{cond:reproduction.vect.I}The offspring vectors $\nu(t) ,
  t \in\Z$ are i.i.d. copies of $\nu$.
\item[(ii)]
The offspring vector $ (\nu
_1,\ldots, \nu_N  )$ is $N$-exchangeable.
\end{longlist}
The first assumption is necessary since it ensures the Markov property
of the ancestral partition process. Under the above assumptions, it is
easy to compute the transition probability of $\Pi^{N,n}$. Let $\pi'
\subset\pi$ be two partitions of $\PP_n$ and denote by $a$ and $b$ the
number of equivalent classes of $\pi$ and $\pi'$, respectively. Then
$b$ may be decomposed as follows: $b=b_1+ \cdots+ b_a$, where $b_i$'s
are ordered positive integers denoting the number of equivalent classes
of $\pi'$ that we have to merge in order to obtain one equivalent class
of $\pi$. By a combinatorial ``putting balls into boxes'' argument, we
obtain that the transition probability from $\pi'$ to $\pi$ is
%
\begin{eqnarray}
\label{equa:transition.prob.general.model} p_N \bigl(\pi', \pi\bigr) &= &\P \bigl(
\Pi_{t+1}^{N,n} = \pi\mid\Pi_{t}^{N,n}=
\pi ' \bigr)
\nonumber
\\[-8pt]
\\[-8pt]
\nonumber
& = &\frac{1}{(N)_b} \mathop{\sum_{i_1, \ldots, i_a =1 }}_{\mathrm{all\ distinct}}^{N}
\E \bigl[ (\nu_{i_1})_{b_1} \cdots(\nu_{i_a})_{b_a}
\bigr] ,
\end{eqnarray}
where $(N)_b:= N (N-1)\cdots(N-b+1)$. If the offspring vector is
$N$-exchangeable we may further simplify (\ref
{equa:transition.prob.general.model}) obtaining
\[
p_N \bigl(\pi', \pi\bigr) = \frac{(N)_a}{(N)_b} \E
\bigl[ (\nu_{1})_{b_1} \cdots (\nu_{a})_{b_a}
\bigr]. %
\]
We now state M\"olhe and Sagitov result, we keep their notation and let
$c_N$ be the probability that two individuals, chosen randomly without
replacement from some generation, have a common ancestor one generation
backward in time (it is the same $c_N$ appearing in the statement of
Theorem~\ref{thmm.general.criteria.independent.case}).
%
\begin{equation}
\label{equa:defi.cN} c_N: = \frac{1}{N(N-1)} \sum
_{i}^{N} \E \bigl[\nu_i(t) \bigl(
\nu_i(t)-1\bigr) \bigr] = \frac{1}{(N-1)} \E \bigl[
\nu_1(t) \bigl(\nu_1(t)-1\bigr) \bigr].
\end{equation}

\begin{thmm}[(M\"olhe and Sagitov \cite{Mohle2001})] \label
{thmm:mohle.sagitov} Suppose that for all $a \geq1$ and $b_1 \geq
\cdots\geq b_a \geq2$, the limits
%
\begin{equation}
\label{equa.thmm:mohle.sagitov} \lim_{N \to\infty} \frac{ \E [ (\nu_{1})_{b_1} \cdots(\nu
_{a})_{b_a}  ] }{N^{b_1+\cdots+b_a-a} c_N}
\end{equation}
exist, and let $b:=b_1+\cdots+b_a$. If
\[
\lim_{N\to\infty} c_N = 0, %
\]
then the time-rescaled ancestral processes $  (\Pi^{N,n}_{\lfloor
t/c_N \rfloor},   t \geq0  ) $ converge weakly as $N \to\infty
$ to a process $ (\Pi^{\infty,n}_t,   t \geq0 ) $ that has
the same law as the restriction to $[n]$ of a $\Xi$-coalescent.
Furthermore, the transition rates $\lambda_{b; b_1, \ldots, b_a }$,
that characterize the distribution of $\Pi^{\infty,n}_t$, are equal to
the limits in (\ref{equa.thmm:mohle.sagitov}). On the other hand, if
\[
\lim_{N\to\infty} c_N = c >0, %
\]
then the processes $ (\Pi^{N,n}_{ t },   t \in\N )$ converge
weakly as $N \to\infty$ to a process $ (\Pi^{\infty,n}_t,   t \in
\N ) $, which has the same law as the restriction to $[n]$ of a
discrete-time $\Xi$-coalescent. The transition probabilities $p_{b;
b_1,\ldots,b_a }$ satisfy
%
\begin{equation}
p_{b; b_1,\ldots,b_a } = \lim_{N \to\infty} \frac{ \E [ (\nu
_{1})_{b_1} \cdots(\nu_{a})_{b_a}  ] }{N^{b_1+\cdots+b_a-a}}.
\end{equation}
\end{thmm}

The existence of the limits in (\ref{equa.thmm:mohle.sagitov}) implies
that the finite-dimensional distributions of $\Pi^{N,n}_{\lfloor t /c_N
\rfloor}$ converge to those of the coalescent $\Pi^n_t$, as proved in
\cite{Mohle2001}. The authors in \cite{Mohle1999,Mohle2001} prove that
when $c_N \to0$ the sequence of processes $\Pi^{N,n}_{\lfloor t /c_N
\rfloor}$ is tight, which implies the weak convergence in the Skorokhod sense.

\section{Relation with Brunet and Derrida's model}\label{sec.derrida}

In this section, we will assume that Theorem~\ref
{thmm.general.criteria.independent.case} holds and we show that when
the $\xi_{ij}$'s are Gumbel distributed, then the family sizes $\nu(t)$
of the model (\ref{definition.X.derrida.brunet.coalescent}) are i.i.d.
and the distribution satisfies the hypotheses of Theorem~\ref
{thmm.general.criteria.independent.case} with $\alpha=1$, which implies
Theorem~\ref{thmm.genealogy.Gumbel.case}. We bring to the reader's
attention two important details.

The first one is that the time restriction in the statement of Theorem~\ref{thmm.genealogy.Gumbel.case} is a necessary condition. One
immediate reason is that the ancestral process is not even defined for
\mbox{$t > T$}. A more subtle reason is that the partition $\Pi^{N,n}_{
\lfloor T (\log N) \rfloor}$ depends on the initial distribution
$X_1(0), \ldots, X_N(0)$. This dependence can be easily illustrated by
the following example. One chooses an initial position of points:
$X_1(0), \ldots, X_N(0)$, for which $X_1(0) \gg X_i(0)$. Then, with an
overwhelming probability, every individual in generation one descends
from $X_1(0)$ and
\[
\Pi^{N,n}_{ \lfloor T (\log N) \rfloor} = \bigl\{(1,\ldots,n)\bigr\}, %
\]
in particular, as $N\to\infty$ the partition $\Pi^{N,n}_{ \lfloor T
(\log N) \rfloor}$ does not converge in distribution to the
$n$-Bolthausen--Sznitman coalescent at time $T$.

Secondly, we emphasize that, in the general case, the offspring vectors
$\nu(t)$ obtained from~(\ref{definition.X.derrida.brunet.coalescent})
may not be independent from generation to generation. We refer to \cite
{Cortines2013} to provide a picture of a situation, in which the
positions of the particles are highly related to the positions of their
ancestors. It is considered the case, in which the distribution of $\xi
_{ij}$ depends on $N$
\[
\P ( \xi_{ij} = 0 ) = 1- \P ( \xi_{ij} = -1 ) =
1/N^{1+r}. %
\]
In this model, the number of leaders $\sharp \{i; X_i(t)= \max\{
X_j(t)\}  \}$ in generation $t$ has a strong correlation with the
number of leaders in generation $t-1$. Therefore, the fitness vectors
$ (\eta(t) )_{t \in\N}$ between successive generations are
correlated, and hence the offspring vectors $\nu(t)$ are not
independent (in particular (i) in page
\pageref{cond:reproduction.vect.I} does not hold).

Before proving Theorem~\ref{thmm.genealogy.Gumbel.case}, let us present
some preliminary results and explain why the Gumbel case is particular.
In \cite{Comets2013}, it is shown that the particles remain grouped as
$t$ increases and that the position of the front at time $t$ may be
described by any numerical function $\Phi: \R^N \rightarrow\R$ that
is increasing for the partial order on $\R^N$ and that commutes to
space translations by constant vectors
%
\begin{equation}
\Phi(x + r \mathbf{1}) = r + \Phi(x),
\end{equation}
where $\mathbf{1}$ is the vector $(1, 1, \ldots, 1) \in\R^N$. For a
given function $\Phi$, we denote by $x^0$ the vector $x \in\R^N$
shifted by $\Phi(x)$.
\[
x^0 = x - \Phi(x). %
\]
The authors also prove that there exists a non-random constant $v_N$
(not depending on $\Phi(\cdot)$) called speed of the front such that
\[
\lim_{t\to\infty} \frac{\Phi (X(t) )}{t} = v_N\qquad \mbox{a.s.}
\]
It is then clear that there is no invariant measure for $X(t):= (
X_1(t), \ldots, X_N(t)  )$. On the other hand, if we consider the
shifted process $X^0 (t):= X(t)-\Phi (X(t) )$, then there exists
a unique invariant measure (depending on $\Phi(\cdot)$) for it. In the
Gumbel case, an appropriate measure of the front location is
%
\begin{equation}
\label{equa:front.position.gumbel.brunet.derrida} \Phi(x) = \beta^{-1} \log\sum
_{i=1}^{N} \exp(\beta x_i).
\end{equation}
In the proof of Proposition~\ref
{prop.indep.offspring.generations.gumbel}, we show that if the $\xi
_{ij}$ are Gumbel $G(\rho,\beta)$-distributed, then $\Phi ( X(t)
)$ has all information needed to construct the next generation. The
technique that we will present has been used in \cite{Brunet2004} to
calculate the velocity and diffusion constant of the $N$-particles
system. In \cite{Comets2013}, the authors use the same argument to
calculate explicitly the invariant measure for the process $X^0(t)$. It
has the law of a shifted vector $V^0:=V-\Phi(V)$ of a vector $V$
obtained from a $N$-sample from a Gumbel $G(0,\beta)$. Summing up, when
the disorder is Gumbel distributed the model is completely soluble,
allowing exact computations.

\begin{prop}\label{prop.indep.offspring.generations.gumbel} Assume that
$\xi_{ij}$ in (\ref{definition.X.derrida.brunet.coalescent}) are Gumbel
$G(\rho,\beta)$-distributed and denote by $\nu_i(t)$ the number of
descendants of $X_i(t)$ in generation $t+1$.

Then, for every starting configuration $\mu$ the family sizes $\nu(t)=
 ( \nu_1(t), \ldots, \nu_N(t) ),   t\geq1$ are i.i.d. copies of
$\nu$ a doubly stochastic multinomial random variable with $N$ trials
and probability outcomes $\eta_i$ given by
%
\begin{equation}
\label{equa:defi.coef.fitness.gumbel.final} \eta_i = \mathcal{E}^{-1}_i
\Big/ \Biggl( \sum_{k=1}^{N} \mathcal
{E}^{-1}_k \Biggr),
\end{equation}
where $ \{\mathcal{E}_i;   1 \leq i \leq N  \}$ are independent
and exponentially distributed with parameter $1$. If $\mu$ has the law
of a shifted vector $V^0:=V-\Phi(V)$ of a vector $V$ obtained from a
$N$-sample from a Gumbel $G(0,\beta)$, then we may take $t \geq0$.
\end{prop}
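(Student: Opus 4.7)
The plan is to exploit the classical independence of the maximum and the argmax for the Gumbel distribution, in order to decouple, within a single generation, the ``who is the parent'' information (which determines $\nu(t)$) from the ``where does the child land'' information (which determines $\eta(t+1)$). The starting observation is the following one-line computation: if $\xi_1,\ldots,\xi_N$ are i.i.d.\ $G(\rho,\beta)$ and $c_1,\ldots,c_N\in\R$ are deterministic, then setting $S:=\sum_i e^{\beta c_i}$ the maximum $M:=\max_i(c_i+\xi_i)$ is $G\big(\rho+\beta^{-1}\log S,\beta\big)$-distributed, the argmax $I$ satisfies $\P(I=i)=e^{\beta c_i}/S$, and \emph{$M$ and $I$ are independent}. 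This becomes transparent after the change of variable $\mathcal{E}_i:=e^{-\beta(c_i+\xi_i-\rho)}$, which turns $(M,I)$ into the minimum and the argmin of independent exponentials with rates $e^{\beta c_i}$.

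Apply this, for each child index $j$ and each time $t$, with $c_i=X_i(t)$: given $\F_t$ the ancestor $\tau_j(t+1):=\arg\max_i\{X_i(t)+\xi_{ij}(t+1)\}$ and the new position $X_j(t+1)$ are independent, and across distinct $j$ the pairs $\big(\tau_j(t+1),X_j(t+1)\big)$ are also independent because they use disjoint $\xi$'s. Writing $S(t):=\sum_k e^{\beta X_k(t)}$, the previous identification gives that
\[
\mathcal{E}_j(t+1):=e^{\beta\rho}\,S(t)\,e^{-\beta X_j(t+1)},\qquad 1\le j\le N,
\]
are i.i.d.\ standard exponentials, independent of $\F_t$ and independent of the ancestor vector $\tau(t+1)$. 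A direct computation then rewrites the fitness coefficient as
\[
\eta_i(t+1)=\frac{e^{\beta X_i(t+1)}}{\sum_k e^{\beta X_k(t+1)}}=\frac{\mathcal{E}_i(t+1)^{-1}}{\sum_k\mathcal{E}_k(t+1)^{-1}},
\]
so $\eta(t+1)$ is a deterministic function of the ``fresh'' standard exponential vector $\mathcal{E}(t+1)$, in particular independent of $\F_t$, and has exactly the distribution (\ref{equa:defi.coef.fitness.gumbel.final}).

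Iterating this observation gives that $\big(\eta(t)\big)_{t\ge 1}$ are i.i.d.\ with the claimed law. Now $\nu(t)$ is entirely determined by $\tau(t+1)$, which given $\eta(t)$ is a sample of $N$ i.i.d.\ draws from the distribution $\eta(t)$; and by the Gumbel factorization the ancestor vector $\tau(t+1)$ and the fresh exponentials $\mathcal{E}(t+1)$ (which control $\eta(t+1)$) are conditionally independent given $\F_t$. Conditioning on the whole sequence $\big(\eta(s)\big)_{s\ge 1}$ thus shows that the $\nu(t)$'s are conditionally independent multinomials, hence unconditionally i.i.d.\ doubly stochastic Multinomials with the stated law. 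For the addendum: when $X(0)$ has the law of $V-\Phi(V)$ with $V$ an $N$-sample of $G(0,\beta)$, the normalization in $\eta_i(0)$ cancels the shift $\Phi(V)$, and since $\mathcal{E}_i:=e^{-\beta V_i}$ are i.i.d.\ standard exponential, one gets $\eta_i(0)=\mathcal{E}_i^{-1}/\sum_k\mathcal{E}_k^{-1}$, the target distribution; the preceding argument can therefore be started at $t=0$.

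The main obstacle is securing precisely this conditional independence structure: one must check that, given $\F_t$, the ancestor vector $\tau(t+1)$ which determines $\nu(t)$ is independent of the new positions $X(t+1)$ which determine $\eta(t+1)$. This is exactly what the Gumbel hypothesis buys, and it is essentially the only reason the model is solvable: for any other continuous distribution the max and the argmax become correlated, the $\eta(t)$'s inherit a dependence on $X(t-1)$, and the i.i.d.\ conclusion for $\nu(t)$ breaks down, as anticipated in the discussion preceding the proposition.
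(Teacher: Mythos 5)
Your proposal is correct and follows essentially the same route as the paper: the change of variables turning Gumbel maxima into exponential minima, the stability of the exponential law under independent minima to show that $\mathcal{E}(t+1)$ is a vector of i.i.d.\ standard exponentials independent of $\F_t$, and the independence of the minimum and the argmin to decouple $\nu(t)$ (a function of the ancestor assignments $\tau(t+1)$) from $\eta(t+1)$ (a function of $\mathcal{E}(t+1)$) — the paper phrases this last step as the independence of $\nu(t+1)$ from the $\sigma$-algebra generated by $\F_t$ and the events $A_{i,j;t}=\{\tau_i(t+1)=j\}$, which is exactly your conditional-independence argument.
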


\begin{pf} Let $\Phi(x)$ be given by (\ref
{equa:front.position.gumbel.brunet.derrida}), then $\Phi(x)$ has all
information one needs to construct the next generation and the process
shifted by $\Phi$: $X^0_j(t) = X_j(t)- \Phi ( X(t)  )$, are
independent from generation to generation. Indeed, for $t\geq1$ we may
write $X_j(t)$ as follows \textit{(see \cite{Comets2013} (Theorem~3.1) and
\cite{Brunet2004}):}
%
\begin{equation}
\label{equa:decomposition,gumbel,case} X_j(t) = \rho + \Phi \bigl(X(t-1) \bigr) -
\beta^{-1} \log\mathcal{E}_j (t),
\end{equation}
where $\mathcal{E}_j (t) : = \min_{1 \leq i \leq N}  \{ \exp
(-\beta(\xi_{ij}(t) -\rho) - \beta X_i^0(t-1)  )  \} $. Since
$\xi_{ij}(t)$ are Gumbel $G( \rho,\beta)$-distributed, $\exp
(-\beta(\xi_{ij}(t) -\rho)  ) $ are exponentially distributed with
parameter one. Hence, conditionally on $\F_{t-1}$,
\[
\exp \bigl(-\beta\bigl(\xi_{i j}(t) -\rho\bigr) - \beta
X_i^0(t-1) \bigr),\qquad 1 \leq i \leq N %
\]
are independent and $\exp (-\beta(\xi_{i j}(t) -\rho) - \beta
X_i^0(t-1)  )$ is distributed according to an exponential random
variable with parameter $
\exp ( \beta X_i^0(t-1)  )$. Applying the stability property of
the exponential law under independent minimum, we obtain that
conditionally on $\F_{t-1}$ each variable $\mathcal{E}_i (t)$ is
exponentially distributed with parameter one and, moreover, that the
whole vector $ \mathcal{E} (t): =  (\mathcal{E}_i (t), i \leq N
) $ is conditionally independent. Therefore, the vector $\mathcal{E}
(t)$ is independent from $\F_{t-1}$ and its coordinates $\mathcal{E}_i
(t),   1 \leq i \leq N$ are i.i.d. having an exponential law with
parameter one. Using once again the stability property of the
exponential law under independent minimum,
%
\begin{eqnarray}
\label{equa:defi.coef.fitness.gumbel} \eta_i (t) & :=& \P \bigl( \xi_{ij} (t+1)
+ X_i(t) > \xi_{k
j}+X_k(t), \mbox{ for
every } k \neq i | \F_t \bigr)
\nonumber
\\
& =& \P \Bigl( e^{ -\beta( \xi_{ij} (t+1) -\rho)} e^{ -
\beta X_i(t) } < \min_{k \neq i }
e^{-\beta( \xi_{k j} (t+1) -\rho)} e^{ -\beta X_k(t) } | \F_t \Bigr)
\\
& =& \exp \bigl(\beta X_i(t) \bigr) \Big/ \Biggl( \sum
_{k=1}^{N} \exp \bigl( \beta X_k(t)
\bigr) \Biggr).\nonumber
\end{eqnarray}
Then, from (\ref{equa:decomposition,gumbel,case}) we obtain that
%
\begin{equation}
\eta_i (t) = \mathcal{E}^{-1}_i (t) \Big/
\Biggl( \sum_{k=1}^{N}
\mathcal{E}^{-1}_k (t) \Biggr),
\end{equation}
which proves (\ref{equa:defi.coef.fitness.gumbel.final}). In
particular, the family sizes $\nu(1), \nu(2), \ldots$ have the same
distribution. If at $t=0$ the particles are distributed according to
the invariant measure the same argument holds and $\nu(t),   t \geq0$
have the same distribution.

\emph{We now prove that the $\nu(t)$'s are independent}. It suffices to
show that
%
\begin{equation}
\label{equa:aim:indep:prop.indep.offspring.generations.gumbel} \E \bigl[ f_1 \bigl(\nu(1) \bigr) \cdots
f_{t+1} \bigl(\nu(t+1) \bigr) \bigr] = \E \bigl[ f_1
\bigl( \nu(1) \bigr) \cdots f_{t} \bigl( \nu(t) \bigr) \bigr] \E
\bigl[ f_{t+1} \bigl(\nu(t+1) \bigr) \bigr],
\end{equation}
for all continuous bounded functions $f_1(\cdot), \ldots, f_{t}(\cdot),
f_{t+1}(\cdot)$. Let $A_{i,j;t}$ be the event
\[
A_{i,j;t} = \Bigl\{ \xi_{ji}(t+1) + X_j(t) >
\max_{ k \neq i} \bigl\{ \xi _{k i}(t+1) +
X_k(t) \bigr\} \Bigr\}
\]
that $X_i(t+1)$ descends from $X_j(t)$. Denote by $\G_t$ the $\sigma
$-algebra generated by $\F_t$ and $A_{i,j;t}$ for every $1\leq i,j\leq
N$, then $\nu(1), \ldots, \nu(t)$ are $\G_t$-measurable. We claim that
$\nu(t+1)$ is independent from $\G_t$, which proves (\ref
{equa:aim:indep:prop.indep.offspring.generations.gumbel}). Since $\nu
(t+1)$ is completely determined by $ \{\mathcal{E}_k (t+1),   1\leq
k \leq N  \} $ and $ \{\xi_{kl}(t+2),   1\leq k, l\leq N  \}
$, it is immediate that it is independent from $\F_t$. Hence, we prove
the claim once we show that $\nu(t+1)$ and $A_{i,j;t}$ are independent
for every $1\leq i,j\leq N$. Since
\[
A_{i,j;t} \in\sigma \bigl\{\F_{t}; \bigl\{
\xi_{k i}(t+1); 1 \leq k \leq N \bigr\} \bigr\} \subset
\F_{t+1}, %
\]
it suffices to show that $A_{i,j;t}$ is independent from $\sigma \{
\mathcal{E}_k (t+1),   1\leq k \leq N  \}$. It is not hard to show
that $\mathcal{E}_k (t+1)$ and $A_{i,j;t}$ are independent, whenever
$k\neq i$ and we leave the details to the reader. Let $g(\cdot)$ be a
bounded continuous function. Conditionally, on $\F_t$, $\mathcal
{E}_i(t+1)$ is the minimum of $N$ independent random variables
exponentially distributed with parameters $\exp ( \beta X_k^0(t-1)
 )$ and the set $A_{i,j;t}$ is the event that the minimum is
attained by $\exp (-\beta(\xi_{ji}(t) -\rho) - \beta X^0_j(t)
)$. Then, using standard properties of exponential distributions, we obtain
\begin{eqnarray*}
&&\E \bigl[ g \bigl(\mathcal{E}_i(t+1) \bigr) \mathbf{1}_{A_{i,j;t} }
|\F _{t} \bigr]\\
&&\quad = \P ( A_{i,j;t} |\F_{t} ) \int
_{\R_+} g(y) \cdot \frac{ \exp ( -y \sum e^{\beta X^0_k(t-1)}  )}{\sum e^{\beta
X^0_k(t-1) } } \cdot\mathrm{d}y
\\
&&\quad = \P ( A_{i,j;t} |\F_{t} ) \int_{\R_+}
\,\mathrm{d}y\, g(y) \exp( -y).
\end{eqnarray*}
We used that $X^0$ is the process shifted by $\Phi$, which satisfies
$\sum e^{\beta X^0_k(t-1)} = 1$. Then $\mathcal{E}_i(t+1)$ and
$A_{i,j;t}$ are independent, which proves the claim and, therefore, the
proposition.
\end{pf}

\begin{pf*}{Proof of Theorem~\ref{thmm.genealogy.Gumbel.case}} By
Proposition~\ref{prop.indep.offspring.generations.gumbel}, the family
sizes $\nu(t)$ are independent and identically distributed for $t \geq
1$ (and $t\geq0$ if the initial position of particles is distributed
according to the invariant measure). Furthermore, it is easy to compute
the tail distribution of $\mathcal{E}^{-1}_i (t)$
\[
\P\bigl( \mathcal{E}^{-1}_i (t) \geq x\bigr) = 1 -
e^{- x^{-1}} \sim1/x,\qquad x \to\infty,
\]
where ``$\sim$'' means that the ratio of the sides approaches to one as
$x \to\infty$, so (\ref{equa.cond.Y.alpha}) holds with $\alpha= 1$.

If $T_0< T$ and $N$ is sufficient large such that $(T-T_0)(\log N) \geq
1$, then the family sizes $\nu(t),   t \in \{\lfloor(T-T_0)(\log
N)\rfloor, \ldots, \lfloor T(\log N)\rfloor \}$ are i.i.d. It is
then possible to apply Theorem~\ref
{thmm.general.criteria.independent.case} with $\alpha=1$, which
concludes the proof.
\end{pf*}

\section{Proof of Theorem \texorpdfstring{\protect\ref
{thmm.general.criteria.independent.case}}{1.2}}\label{sec.proof.theorem}

The proof of Theorem~\ref{thmm.general.criteria.independent.case} will
be divided in two main parts. In the first one, we focus on the case
where $Y_1$ has finite second moment, which generalize $\alpha> 2$ in
(\ref{equa.cond.Y.alpha}). The proof of the first part of Theorem~\ref
{thmm.general.criteria.independent.case} is an adaptation of the proof
of part (a) of Theorem~4 in \cite{Schweinsberg2003}. In the second
part, we prove Theorem~\ref{thmm.general.criteria.independent.case} for
$\alpha\leq2$. We do so by studying the Laplace transform of $Y_i$
and its derivatives.

Before proving Theorem~\ref{thmm.general.criteria.independent.case}, we
prove a general statement about multinomial distributions. In the next
lemma, we will denote by $\nu$ a $N$-class multinomial random variable
with $N$ trials and by $\eta_i$ the probability outcomes, that are not
necessarily $N$-exchangeable.

\begin{lem} \label
{lem:representatation.offspring.vector.in.term.of.eta} Let $\nu= (\nu
_1, \ldots, \nu_N  )$ be a doubly stochastic multinomial random
variable with probability outcomes $\eta_1, \ldots, \eta_N$. Let also
$b_1 \geq\cdots\geq b_a \geq1 $ and $b=b_1 + \cdots+b_a$ (we also
assume that $b \leq N$). Then
%
\begin{equation}
\label{equa:representatation.offspring.vector.in.term.of.eta} \E \bigl[ (\nu_{1})_{b_1} \cdots(
\nu_{a})_{b_a} \bigr] = (N)_b \E \bigl[
\eta_1^{b_1} \cdots\eta_a^{b_a}
\bigr].
\end{equation}
\end{lem}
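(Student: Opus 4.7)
The natural approach is to condition on $\eta = (\eta_1, \ldots, \eta_N)$ and to reduce the statement to a classical identity for falling-factorial moments of the multinomial distribution, namely
\begin{equation*}
\E\big[(\nu_1)_{b_1} \cdots (\nu_a)_{b_a} \,\big|\, \eta\big] \;=\; (N)_b \, \eta_1^{b_1}\cdots\eta_a^{b_a}.
\end{equation*}
Once this conditional identity is established, taking expectations and using the tower property gives the claim, since the right-hand side above is precisely the integrand of $(N)_b\E[\eta_1^{b_1}\cdots\eta_a^{b_a}]$.

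To prove the conditional identity, I would use the balls-in-boxes representation of $\nu$: given $\eta$, place $N$ labeled balls independently into the $N$ boxes, with ball $\ell$ assigned to box $i$ with probability $\eta_i$, and let $\nu_i$ be the number of balls in box $i$. The quantity $(\nu_i)_{b_i} = \nu_i(\nu_i-1)\cdots(\nu_i-b_i+1)$ is exactly the number of ordered $b_i$-tuples of distinct balls sitting in box $i$. Hence the product $(\nu_1)_{b_1}\cdots(\nu_a)_{b_a}$ counts the number of ordered configurations in which one selects $b_i$ distinct balls in box $i$ for each $i=1,\ldots,a$, the choices across boxes being automatically disjoint since boxes are disjoint.

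Now I would compute this expectation by summing over all $(N)_b$ ordered choices of $b$ distinct balls from the $N$ available ones, with the first $b_1$ chosen balls designated to box $1$, the next $b_2$ to box $2$, and so on. The probability that a fixed such collection of balls is placed in its prescribed boxes (with the other $N-b$ balls allowed to go anywhere) equals $\eta_1^{b_1}\cdots\eta_a^{b_a}$ by independence. Summing over the $(N)_b$ ordered configurations yields the desired formula; integrating against the law of $\eta$ then gives (\ref{equa:representatation.offspring.vector.in.term.of.eta}).

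There is no real obstacle here; the statement is essentially the standard multinomial factorial-moment identity lifted to the doubly stochastic setting. The only thing to check carefully is that the combinatorial count $(N)_b$ is correct when the $b_i$'s are not all equal, which is handled cleanly by the ordered-tuple interpretation above.
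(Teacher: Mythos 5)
Your proof is correct, and it takes a genuinely different route from the paper's. You establish the conditional identity $\E[(\nu_1)_{b_1}\cdots(\nu_a)_{b_a}\mid\eta]=(N)_b\,\eta_1^{b_1}\cdots\eta_a^{b_a}$ by the balls-in-boxes representation: $(\nu_i)_{b_i}$ counts ordered $b_i$-tuples of distinct balls landing in box $i$, so the product is a sum of $(N)_b$ indicators (one for each ordered choice of $b$ distinct balls with prescribed destinations), each with conditional probability $\eta_1^{b_1}\cdots\eta_a^{b_a}$; linearity of expectation and the tower property finish the argument. The paper instead computes the conditional expectation directly from the multinomial probability mass function, shifts indices via $k_j=i_j-b_j$, factors out $(N)_b\,\eta_1^{b_1}\cdots\eta_a^{b_a}$, and recognizes the remaining sum as the multinomial expansion of $\bigl(\eta_1+\cdots+\eta_a+(1-\eta_1-\cdots-\eta_a)\bigr)^{N-b}=1$. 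Your indicator argument is more conceptual and avoids the bookkeeping of the pmf manipulation (in particular it makes transparent why $(N)_b$ appears even when the $b_i$ are unequal, which you rightly flag as the only point needing care); the paper's computation is more mechanical but self-contained and requires no auxiliary probabilistic construction. Both handle the doubly stochastic setting identically, by conditioning on $\eta$ at the end.
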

\begin{pf} To simplify the notation, we assume that $\eta_1, \ldots, \eta
_N$ are non-random. Then, $\nu$ is distributed according to a standard
multinomial distribution.
%
\begin{eqnarray}
\label{equa:lem:representatation.offspring.vector.in.term.of.eta}
&&\E \bigl[ (\nu_{1})_{b_1} \cdots(
\nu_{a})_{b_a} \bigr]
\nonumber
\\[-8pt]
\\[-8pt]
\nonumber
&&\quad =\mathop{ \sum_{i_j \geq b_j }}_{i_1+\cdots+i_a \leq N}
\frac{N!
\eta_1^{i_1} \cdots\eta_a^{i_a} (1-\eta_{1,\ldots,a})^{N-i_{1,\ldots
,a}}}{i_1! \cdots i_a! (N-i_{1,\ldots,a})!} \cdot\frac{i_{1}
!}{(i_1-b_1)!} \cdots\frac{i_{a} !}{(i_a-b_a)!},
\end{eqnarray}
where $i_{1,\ldots,a} := i_1+\cdots+i_a$ and $\eta_{1,\ldots,a} := \eta
_1+\cdots+\eta_a$. By a change of variables $k_j= i_j -b_j$ we rewrite
(\ref{equa:lem:representatation.offspring.vector.in.term.of.eta})
\begin{eqnarray*}
&&\sum_{k_1+\cdots+k_a \leq N-b} \frac{N!}{k_1! \cdots k_a!
(N-b-k_{1,\ldots,a})!}\cdot
\eta_1^{k_1+b_1} \cdots\eta_a^{k_a+b_a} (1-
\eta_{1,\ldots,a})^{N-b-k_{1,\ldots,a}}
\\
&&\quad = (N)_b \eta_1^{b_1} \cdots
\eta_a^{b_a} \sum\frac
{(N-b)!}{k_1! \cdots k_a! (N-b-k_{1,\ldots,a})!}\cdot
\eta_1^{k_1} \cdots\eta_a^{k_a} (1-
\eta_{1,\ldots,a})^{N-b-k_{1,\ldots,a}}
\\
&&\quad = (N)_b \eta_1^{b_1} \cdots
\eta_a^{b_a} \bigl( \eta_1 + \cdots +
\eta_a + (1-\eta_{1,\ldots,a}) \bigr)^{N-b},
\end{eqnarray*}
proving the result in the non-random case. The random case is obtained
by conditioning on $\sigma\{\eta_1, \ldots, \eta_N\}$.
\end{pf}

\subsection{Convergence to Kingman's coalescent \texorpdfstring{$\E[Y_1^2]<\infty$}{E[Y12]<infty}}

In \cite{Mohle2000}, M\"ohle shows that if the family sizes are not
``too large'' the processes $\Pi^{N,n}_{\lfloor t/c_N \rfloor}$ converge
to the Kingman's $n$-coalescent.

\begin{prop}[(M\"ohle \cite{Mohle2000})] \label
{prop.mohle.kingman.coalescent} Suppose that
%
\begin{equation}
\label{equa:prop.mohle.kingman.coalescent} \lim_{N \to\infty} \frac{\E [(\nu_i)_3  ]}{N^2 c_N} = 0.
\end{equation}
Then, as $N \to\infty$, the processes $\Pi^{N,n}_{\lfloor t/c_N \rfloor
}$ converge to the Kingman's n-coalescent.
\end{prop}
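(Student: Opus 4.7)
The plan is to apply the M\"ohle--Sagitov criterion (Theorem \ref{thm:mohle.sagitov}) and verify that the only non-vanishing limiting rate of $\Pi^{N,n}_{\lfloor t/c_N \rfloor}$ is the pairwise merger rate, which equals $1$; these are precisely the rates characterising Kingman's $n$-coalescent. The pairwise rate is immediate from the definition of $c_N$ in (\ref{equa:defi.cN}): $\E[(\nu_1)_2] = (N-1)c_N$ yields $\lim_{N\to\infty} \E[(\nu_1)_2]/(Nc_N) = 1 = \lambda_{b;2}$ for every $b \geq 2$. For single mergers of $k \geq 3$ blocks, the deterministic bound $\nu_1 \leq N$ gives $(\nu_1)_k \leq N^{k-3}(\nu_1)_3$, hence
\begin{equation*}
\frac{\E[(\nu_1)_k]}{N^{k-1}c_N} \leq \frac{\E[(\nu_1)_3]}{N^2 c_N} \xrightarrow[N\to\infty]{} 0,
\end{equation*}
by hypothesis, yielding $\lambda_{k;k}=0$ for every $k \geq 3$.

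The main obstacle is ruling out simultaneous multiple mergers: for $a \geq 2$ and $b_1 \geq \ldots \geq b_a \geq 2$, writing $b := b_1+\cdots+b_a$, one must show
\begin{equation*}
\frac{\E[(\nu_1)_{b_1} \cdots (\nu_a)_{b_a}]}{N^{b-a}c_N} \longrightarrow 0.
\end{equation*}
A naive H\"older bound treating the $\nu_i$'s as essentially independent yields only $\E[\prod_i(\nu_i)_{b_i}] \lesssim N^{b-1}c_N$, which is too loose by a factor of $N^{a-1}$. Therefore a genuine use of both the $N$-exchangeability of $\nu(t)$ and the constraint $\sum_i\nu_i = N$ is required. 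The strategy is to first reduce to $b_1 = \cdots = b_a = 2$ via the elementary estimate $(\nu_i)_{b_i} \leq N^{b_i-2}(\nu_i)_2$, and then to estimate $\E[(\nu_1)_2 \cdots (\nu_a)_2]$ by expanding $\bigl(\sum_i (\nu_i)_2\bigr)^a$ and using the identity (coming from exchangeability)
\begin{equation*}
\sum_{i \neq j}\E\big[(\nu_i)_2(\nu_j)_2\big] = \E\Big[\Big(\textstyle\sum_i (\nu_i)_2\Big)^2\Big] - \sum_i \E\big[((\nu_i)_2)^2\big],
\end{equation*}
combined with the Stirling-type decomposition $\E[((\nu_1)_2)^2] = \E[(\nu_1)_4] + 4\E[(\nu_1)_3] + 2\E[(\nu_1)_2]$ and the trivial bound $\E[(\nu_1)_4]\leq N\E[(\nu_1)_3]$. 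These reduce the estimate to the hypothesis $\E[(\nu_1)_3] = o(N^2 c_N)$. The combinatorial bookkeeping of the cross terms produced by exchangeability is where the technical effort would concentrate.

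Finally, Theorem \ref{thm:mohle.sagitov} requires $c_N \to 0$ in order for the scaling $t/c_N$ to yield a continuous-time limit. This follows from the hypothesis: applying the Cauchy--Schwarz inequality to the factorisation $\nu_1(\nu_1-1) = \sqrt{\nu_1}\cdot \sqrt{\nu_1}\,(\nu_1-1)$ and using $\E[\nu_1] = 1$ gives
\begin{equation*}
\big((N-1)c_N\big)^2 \;\leq\; \E[(\nu_1)_3] + (N-1)c_N.
\end{equation*}
If $c_N$ were bounded below by some $c > 0$ along a subsequence, this would force $\E[(\nu_1)_3]/(N^2 c_N)$ to remain bounded below, contradicting the hypothesis. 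With $c_N \to 0$ and all rates other than the pairwise merger shown to vanish, Theorem \ref{thm:mohle.sagitov} delivers the convergence to the Kingman $n$-coalescent in the Skorokhod sense.
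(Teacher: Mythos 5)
First, a point of comparison: the paper does not prove this proposition at all --- it is quoted from M\"ohle \cite{Mohle2000} and used as a black box --- so your attempt can only be judged on its own merits. Your architecture (verify the M\"ohle--Sagitov limits (\ref{equa.thm:mohle.sagitov}) and show only the pairwise rate survives) is the right one, and three of your four steps are correct and complete: the pairwise rate equal to $1$, the vanishing of $\lambda_{k;k}$ for $k\geq 3$ via $(\nu_1)_k\leq N^{k-3}(\nu_1)_3$, and the deduction of $c_N\to 0$ from the hypothesis by Cauchy--Schwarz are all sound.

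The gap is in the step you yourself flag as the main obstacle. For $a=2$, $b_1=b_2=2$, the ingredients you list do not close the estimate. The exchangeability identity writes $(N)_2\,\E[(\nu_1)_2(\nu_2)_2]=\E\bigl[\bigl(\sum_i(\nu_i)_2\bigr)^2\bigr]-N\,\E[((\nu_1)_2)^2]$, so what is required is an \emph{upper} bound of order $o(N^4c_N)$ on $\E\bigl[\bigl(\sum_i(\nu_i)_2\bigr)^2\bigr]$; the decomposition $\E[((\nu_1)_2)^2]=\E[(\nu_1)_4]+4\E[(\nu_1)_3]+2\E[(\nu_1)_2]$ and the bound $\E[(\nu_1)_4]\leq N\E[(\nu_1)_3]$ only control the \emph{subtracted} term, which is itself $o(N^4c_N)$ and hence produces no cancellation. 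Used directly via $\E[(\nu_1)_2(\nu_2)_2]\leq\E[((\nu_1)_2)^2]$, these ingredients give $\E[(\nu_1)_2(\nu_2)_2]\lesssim N\E[(\nu_1)_3]+Nc_N$, and $N\E[(\nu_1)_3]/(N^2c_N)=N\cdot o(1)$ need not vanish. The decisive missing inequality is the pointwise Cauchy--Schwarz bound
\begin{equation*}
\Bigl(\sum_i(\nu_i)_2\Bigr)^2=\Bigl(\sum_i\sqrt{\nu_i}\cdot\sqrt{\nu_i}(\nu_i-1)\Bigr)^2\leq\Bigl(\sum_i\nu_i\Bigr)\sum_i\nu_i(\nu_i-1)^2=N\sum_i\bigl[(\nu_i)_3+(\nu_i)_2\bigr],
\end{equation*}
which yields $\E\bigl[\bigl(\sum_i(\nu_i)_2\bigr)^2\bigr]\leq N^2\E[(\nu_1)_3]+N^2(N-1)c_N$ and hence $\E[(\nu_1)_2(\nu_2)_2]/(N^2c_N)\leq\tfrac{N}{N-1}\E[(\nu_1)_3]/(N^2c_N)+1/N\to 0$. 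This is exactly the factorization you do invoke, but only in your final paragraph and only to prove $c_N\to 0$; it must be deployed in the simultaneous-merger estimate, where it is the heart of the matter. Finally, for $a\geq 3$ your reduction to $b_i\equiv 2$ still leaves $\E[\prod_{i=1}^a(\nu_i)_2]=o(N^ac_N)$ unproved, and invoking the $a=2$ case together with $\prod_{i\geq 3}(\nu_i)_2\leq N^{2(a-2)}$ loses a factor $N^{a-2}$; one needs the Jensen/H\"older extension $\bigl(\sum_i\nu_i(\nu_i-1)\bigr)^a\leq N^{a-1}\sum_i\nu_i(\nu_i-1)^a\leq N^{2a-3}\sum_i[(\nu_i)_3+(\nu_i)_2]$ combined with $(N)_a\,\E[\prod_{i=1}^a(\nu_i)_2]\leq\E[(\sum_i(\nu_i)_2)^a]$, after which the same division by $N^ac_N$ gives $\E[(\nu_1)_3]/(N^2c_N)+1/N\to 0$.
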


We will use Proposition~\ref{prop.mohle.kingman.coalescent} to prove
Theorem~\ref{thmm.general.criteria.independent.case} in the case where
the $Y_i$'s are square integrable. We first estimate $c_N$, the
probability that two individuals have a common ancestor one generation
backward in time.

\begin{lem} \label{lem:cN.Y.square.integrable} Assume that the
hypotheses of Theorem~\ref{thmm.general.criteria.independent.case} hold
with $\E[Y_1^2]< \infty$ and let $c_N$ be as in (\ref{equa:defi.cN}). Then
%
\begin{equation}
\label{equa:lem:cN.Y.square.integrable} \lim_{N \to\infty} N c_N =
\frac{\E[Y_1 ^2 ]}{ \E[Y_1 ]^2}.
\end{equation}
\end{lem}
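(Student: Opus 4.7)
The plan is to invoke Lemma \ref{lem:representatation.offspring.vector.in.term.of.eta} with $a=1$ and $b_1=2$, which produces $\E[\nu_1(\nu_1-1)] = N(N-1)\,\E[\eta_1^2]$. Combined with the definition (\ref{equa:defi.cN}) of $c_N$, this yields the clean identity $c_N = N\,\E[\eta_1^2]$, and therefore
$$N c_N \;=\; N^2\E[\eta_1^2] \;=\; \E\bigl[(Y_1/\bar{Y}_N)^2\bigr], \qquad \bar{Y}_N := N^{-1}\sum_{j=1}^N Y_j.$$
The lemma is thus reduced to a limit theorem for the second moment of $Y_1/\bar{Y}_N$.

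Because $\E[Y_1^2] < \infty$, one has $\mu := \E[Y_1] < \infty$, and the strong law of large numbers gives $\bar{Y}_N \to \mu$ almost surely. The continuous mapping theorem then yields $(Y_1/\bar{Y}_N)^2 \to Y_1^2/\mu^2$ almost surely, and the desired limit $\E[Y_1^2]/\mu^2$ will follow once the interchange of limit and expectation is justified. A natural approach is to split according to the event $A_N := \{\bar{Y}_N > \mu/2\}$: on $A_N$ the pointwise bound $(Y_1/\bar{Y}_N)^2 \leq 4 Y_1^2/\mu^2 \in L^1$ permits an immediate application of the dominated convergence theorem and delivers the correct limit from this part.

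The main obstacle is to show that the contribution from $A_N^c$ is negligible. Here the naive combination of the crude bound $(Y_1/\bar{Y}_N)^2 \leq N^2$ with Chebyshev's inequality $\P(A_N^c) = O(1/N)$ yields only a useless $O(N)$ estimate. To bypass this I would decompose $\bar{Y}_N = (Y_1 + S_N')/N$ with $S_N' := \sum_{j \geq 2} Y_j$ independent of $Y_1$, use the elementary bound $\eta_1 \leq Y_1/S_N'$, and enlarge $A_N^c$ to $\{S_N' \leq N\mu/2\}$ so as to factorise the resulting expectation through the independence of $Y_1$ and $S_N'$. The task then reduces to establishing $\E\bigl[\textbf{1}_{\{S_N' \leq N\mu/2\}}/(S_N')^2\bigr] = o(N^{-2})$, which can be obtained by truncating the $Y_j$ at a carefully chosen level $K=K(N)$ so that a Hoeffding-type exponential concentration bound for the truncated partial sum dominates the probability of the (rare) event that some $Y_j$ exceeds $K$. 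Carrying this estimate through while using only the second-moment hypothesis, and then appealing to Proposition \ref{prop.mohle.kingman.coalescent} afterwards, is the technical core of the argument.
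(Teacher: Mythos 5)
Your reduction to $Nc_N=N^2\E[\eta_1^2]=\E\bigl[(Y_1/\bar Y_N)^2\bigr]$ via Lemma \ref{lem:representatation.offspring.vector.in.term.of.eta}, and your treatment of the good event (SLLN plus dominated convergence with the dominating function $4Y_1^2/\mu^2$), match the paper's argument. The problem is your handling of $A_N^c$. The bound $\eta_1\leq Y_1/S_N'$ discards exactly the self-normalization that makes the bad event harmless, and the target you reduce to, $\E\bigl[\textbf{1}_{\{S_N'\leq N\mu/2\}}(S_N')^{-2}\bigr]=o(N^{-2})$, is not provable under the hypotheses of Theorem \ref{thm.general.criteria.independent.case}: nothing there constrains the lower tail of $Y_1$, and if $\P(Y_1\leq t)$ decays only logarithmically as $t\to 0^+$ (e.g.\ $\P(Y_1\leq t)\sim 1/\log(1/t)$), then $\E[(S_N')^{-2}]=+\infty$ for every $N$ (since $S_N'\leq N\max_i Y_i$ and $\E[(\max_i Y_i)^{-2}]=\int_0^\infty \P(Y_1\leq u^{-1/2})^{N-1}\,\mathrm{d}u$ diverges). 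No truncation/Hoeffding argument can rescue a quantity that is infinite. Moreover, even when it is finite, concentration of $S_N'$ controls the probability of the bad event but not the singularity of $(S_N')^{-2}$ near zero, which is where the divergence comes from.

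The fix is the "crude" bound you rejected, paired with the right tail estimate. On $\{S_N'\leq N(\mu-\delta)\}$ keep $Y_1$ in the denominator: $(Y_1/\bar Y_N)^2\leq Y_1^2/(N^{-1}Y_1)^2=N^2$, so the bad-event contribution is at most $N^2\,\P\bigl(S_N'\leq N(\mu-\delta)\bigr)$. The point you missed is that the lower tail of a sum of i.i.d.\ \emph{nonnegative} variables with finite mean decays exponentially with no further moment assumptions: $e^{-tY_1}\leq 1$ for $t\geq 0$, and $e^{t(\mu-\delta)}\E[e^{-tY_1}]=1-t\delta+o(t)<1$ for small $t>0$, so Chernoff gives $\P(S_N'\leq N(\mu-\delta))\leq \rho^{N-1}$ for some $\rho<1$, which kills the $N^2$. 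This is precisely what the paper does (its term $(II)$ in (\ref{equa:lem:cN.Y.square.integrable.upper.bound})); its term $(I)$ is then handled as in your good-event analysis together with $N^2\P(Y_1\geq cN)\to 0$ from $\E[Y_1^2]<\infty$. (Also, Proposition \ref{prop.mohle.kingman.coalescent} is not needed for this lemma; it enters only afterwards, in the proof of the theorem.)
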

\begin{pf} From Lemma~\ref
{equa:representatation.offspring.vector.in.term.of.eta}, we obtain that
\[
N c_N = N^2 \E \bigl[ \eta_1
^2 \bigr]. %
\]
Let $\delta_1 > 0$, then by definition of $\eta_1$
%
\begin{equation}
\label{equa:lem:cN.Y.square.integrable.dominated.conv} N^2 \E \bigl[ \eta_1 ^2
\bigr] = \E \biggl[ \frac{Y_1^2}{ ( N^{-1}
\sum_{j=1}^{N} Y_j  )^2} \biggr] \geq\E \biggl[ \frac{Y_1^2}{
\delta_1 +  ( N^{-1} \sum_{j=1}^{N} Y_j  )^2}
\biggr].
\end{equation}
Since $Y_1 >0$, we use dominated convergence in (\ref
{equa:lem:cN.Y.square.integrable.dominated.conv}) to obtain that
\[
\liminf_{N \to\infty} N c_N \geq\frac{\E[Y_1^2 ]}{ \delta_1 +
 ( \E[Y_1 ]  )^2}.
\]
The inequality holds for every $\delta_1$ positive, which implies that
the above $\liminf$ is larger than $\E[Y_1 ^2 ] / \E[Y_1 ]^2$. We now
obtain an upper bound for the $\limsup$. We use the Markov inequality
to obtain that for all $c>0$
%
\begin{equation}
\label{equa.markov.inequality.Y.square.integrable} \lim_{x \to\infty} x^2 \P(Y_1
\geq c x) = 0.
\end{equation}
Let $S_{2,N} = \sum_{i=2}^{N} Y_i  $ and take $0<\delta_2<\E[Y_1]$
sufficiently small such that
%
\begin{equation}
\label{equa;choice.delta2epsilon} \frac{\E[Y_1^2]} { ( \E[Y_1] - \delta_2  )^2} \leq\frac{\E
[Y_1^2] }{ \E[Y_1]^2 } + \varepsilon/3,
\end{equation}
for a fixed $\varepsilon> 0$. Then we write
%
\begin{eqnarray}
\label{equa:lem:cN.Y.square.integrable.upper.bound} N^2 \E \bigl[ \eta_1 ^2
\bigr] & =& \E \biggl[ \frac{Y_1^2}{  (
N^{-1} Y_1 + N^{-1} S_{2,N}  )^2} ; S_{2,N} \geq N \bigl(
\E[Y_1] - \delta_2\bigr) \biggr]
\nonumber
\\
&&{} + \E \biggl[ \frac{Y_1^2}{  ( N^{-1} Y_1 + N^{-1}
S_{2,N}  )^2} ; S_{2,N} \leq N \bigl(
\E[Y_1] - \delta_2\bigr) \biggr]
\\
& = &\textsc{(I)} +\textsc{(II)}.\nonumber
\end{eqnarray}
Since $Y_i > 0$, we may bound $\textsc{(II)}$ in (\ref
{equa:lem:cN.Y.square.integrable.upper.bound}) as follows:
\begin{eqnarray*}
\textsc{(II)} & \leq& \E \biggl[ \frac{Y_1^2}{  ( N^{-1} Y_1
)^2} ; S_{2,N} \leq N
\bigl( \E[Y_1] - \delta_2 \bigr) \biggr]
\\
& = &N^2 \P \bigl( S_{2,N} \leq N \bigl(
\E[Y_1] - \delta_2 \bigr) \bigr).
\end{eqnarray*}
So we apply the Chernoff inequality to conclude that if $\delta_2$ is
fixed and $N$ sufficiently large, then $\textsc{(II)}$ is smaller than
$ \varepsilon/ 3 $.
\begin{eqnarray*}
\textsc{(I)} & \leq&\E \biggl[ \frac{Y_1^2}{  ( \E[Y_1] - \delta_2
 )^2} ; Y_1 \leq N
\bigl(\E[Y_1] - \delta_2 \bigr) \biggr]\\
&&{} +
N^2 \P \bigl( Y_1 \geq N \bigl(\E[Y_1] -
\delta_2 \bigr) \bigr)
\\
& \leq&\E \biggl[ \frac{Y_1^2}{  ( \E[Y_1] - \delta_2  )^2 } \biggr] + N^2 \P \bigl(
Y_1 \geq N \bigl( \E[Y_1] - \delta_2
\bigr) \bigr).
\end{eqnarray*}
From (\ref{equa.markov.inequality.Y.square.integrable}) with $c= \E
[Y_1]- \delta_2$, the second term in the right-hand side converges to
zero as $N\to\infty$, and we may choose $N$ conveniently such that it
is smaller than $\varepsilon/3$. It is implied that $N$ is taken such
that $\textsc{(II)}$ is also smaller than $\varepsilon/ 3$. Then,
applying the upper bounds in (\ref
{equa:lem:cN.Y.square.integrable.upper.bound}) we obtain
\[
N^2 \E \bigl[ \eta_1 ^2 \bigr] \leq
\frac{ \E [Y_1^2 ]}{
 ( \E[Y_1] - \delta_2  )^2} + \frac{2}{3} \cdot\varepsilon< \frac{ \E [Y_1^2 ]}{ \E[Y_1]^2} +
\varepsilon. %
\]
Since the inequality holds for every $\varepsilon>0$ and $N$ large
enough, we conclude that $\limsup N c_N \leq\E[Y_1^2]/\E[Y_1]^2$
proving the lemma.
\end{pf}

\begin{pf*}{Proof of Theorem~\ref{thmm.general.criteria.independent.case} in
the case $\E[Y_1^2]< \infty$} In order to prove Theorem~\ref
{thmm.general.criteria.independent.case}, it suffices to show that (\ref
{equa:prop.mohle.kingman.coalescent}) holds and apply Proposition~\ref
{prop.mohle.kingman.coalescent}. From Lemma~\ref
{lem:cN.Y.square.integrable}, there exists a constant $c < 1$ such that
for $N$ sufficiently large $ N c_N > c  \E[Y_1 ^2 ] /\E[Y_1]^2 $, hence
\[
0 \leq\frac{\E [ (\nu_1)_3  ]}{N^2 c_N} \leq\frac{\E [ (\nu
_1)_3  ]}{N} \cdot \frac{ \E[Y_1 ]^2}{ c \E[Y_1 ^2 ]}.
\]
Then, to prove the convergence in (\ref
{equa:prop.mohle.kingman.coalescent}), it suffices to show that $N^{-1}
\E [ (\nu_1)_3  ] \to0 $. From (\ref
{equa:representatation.offspring.vector.in.term.of.eta}), it is
equivalent to $N^2 \E [ \eta_1^3  ] \to0$ as $N \to\infty$. We
proceed as in (\ref{equa:lem:cN.Y.square.integrable.upper.bound}) and obtain
%
\begin{eqnarray}
N^2 \E \bigl[ \eta_1 ^3 \bigr] & =&
N^2 \E \biggl[ \frac{Y_1^3}{  (
Y_1 + S_{2,N}  )^3} ; S_{2,N} \geq N \bigl(
\E[Y_1] - \delta_2\bigr) \biggr]
\nonumber
\\
&&{} + N^2 \E \biggl[ \frac{Y_1^3}{  ( Y_1 + S_{2,N}
)^3} ; S_{2,N} \leq N
\bigl(\E[Y_1] - \delta_2\bigr) \biggr]
\\
& = &\textsc{(I)} +\textsc{(II)}.\nonumber
\end{eqnarray}
Applying the same argument of Lemma~\ref{lem:cN.Y.square.integrable},
we conclude that $\textsc{(II)}$ converges to zero as $N$ diverges and
we also obtain the following upper bound to $\textsc{(I)}$
%
\begin{equation}
\label{equa:bound.I.square.integrable} \textsc{(I)} \leq N^2 \E \biggl[ \frac{Y_1^3}{ ( N (\E[Y_1]-\delta_2)
 )^3} ;
Y_1 \leq N \bigl(\E[Y_1]-\delta_2 \bigr)
\biggr] + N^2 \P \bigl( Y_1 \geq N \bigl(
\E[Y_1]-\delta_2 \bigr) \bigr).
\end{equation}
We use the Markov inequality to show that the second term in the
right-hand side of (\ref{equa:bound.I.square.integrable}) converges to
zero as $N \to\infty$. As a consequence, to finish the proof it
suffices to show that the first term in the right-hand side of (\ref
{equa:bound.I.square.integrable}) converges to zero as $N \to\infty$.
For $\varepsilon> 0$ let $L \in\R_+$ be such that
\[
\E\bigl[Y_1^2 ; Y_1 \geq L\bigr] /
\bigl( \E[Y_1]-\delta_2 \bigr)^2 <
\varepsilon/ 2. %
\]
Since $L$, $\delta_2$ and $\varepsilon$ are fixed we may choose $N$
sufficiently large such that
\[
\frac{ L \E[Y_1^2] }{ N  ( \E[Y_1]-\delta_2  ) ^3 } < \varepsilon / 2, %
\]
and we bound the first term in the right-hand side of (\ref
{equa:bound.I.square.integrable})
%
\begin{eqnarray}
&&N^{2}  \E \biggl[ \frac{Y_1^3}{ ( N \E[Y_1]-\delta_2  )^3} ; Y_1 \leq N
\bigl(\E[Y_1]-\delta_2 \bigr) \biggr]
\nonumber
\\
&&\quad\leq\frac{L}{N  (\E[Y_1]-\delta_2  ) ^3} \cdot\E \bigl[ Y_1^2 ;
Y_1 \leq L \bigr] + \frac{\E [ Y_1^2 ;   L \leq Y_1 \leq
N (\E[Y_1]-\delta_2 )  ]}{ ( \E[Y_1]-\delta_2  ) ^2}
\\
&&\quad\leq\frac{L}{N  (\E[Y_1]- \delta_2  ) ^3} \cdot\E \bigl[ Y_1^2 \bigr] +
\frac{\E [ Y_1^2 \mathbf{1}_{ \{ Y_1 \geq L \}}
]}{ (\E[Y_1]- \delta_2  ) ^2} < \varepsilon,\nonumber
\end{eqnarray}
that finishes the proof.
\end{pf*} 

\subsection{Proof of Theorem \texorpdfstring{\protect\ref
{thmm.general.criteria.independent.case}}{1.2} when \texorpdfstring{$\alpha\leq2$}{alpha<=2}}

The strategy to prove Theorem~\ref
{thmm.general.criteria.independent.case} in the case $\alpha\leq2$ is
to compute the limits (\ref{equa.thmm:mohle.sagitov}) and apply Theorem~\ref{thmm:mohle.sagitov}. In the next proposition, we show how the
moments of $\eta_i$'s are related to the Laplace transform of $Y_i$.

\begin{prop}\label{prop:integral.representation.b's.moment} Let $b_1
\geq b_2 \geq\cdots\geq b_a \geq2 $ be positive integers, $b = b_1+
\cdots+ b_a$ and for $1 \leq i \leq N$
\[
\eta_i : = \frac{Y_i}{\sum_{i=1}^{N} Y_j}, %
\]
where $Y_1, \ldots, Y_N$ are i.i.d. random variables. Then
%
\begin{equation}
\label{equa.lem:integral.representation.b's.moment} \E \bigl[ \eta_1^{b_1} \cdots
\eta_a^{b_a} \bigr] = \frac{1}{\Gamma(b)} \int
_{0}^{\infty} u^{b-1} I_0(u)^{N-a}
I_{b_1}(u) \cdots I_{b_a}(u) \,\mathrm{d} u ,
\end{equation}
where $\Gamma(\cdot)$ is the gamma function and
%
\begin{equation}
\label{equa:definition.I_p.general} I_{p}(u) = \E \bigl[ Y_1^{p}
e^{-u Y_1 } \bigr], \qquad p \in\N.
\end{equation}
\end{prop}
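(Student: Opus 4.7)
The plan is to use the classical Gamma-function identity
\[
\frac{1}{S^{b}} \;=\; \frac{1}{\Gamma(b)}\int_{0}^{\infty} u^{b-1} e^{-uS}\, \mathrm{d}u, \qquad S>0,
\]
applied to $S:=\sum_{j=1}^{N} Y_{j}$. First I would write
\[
\eta_{1}^{b_{1}}\cdots \eta_{a}^{b_{a}} \;=\; \frac{Y_{1}^{b_{1}}\cdots Y_{a}^{b_{a}}}{S^{\,b_{1}+\cdots+b_{a}}} \;=\; \frac{Y_{1}^{b_{1}}\cdots Y_{a}^{b_{a}}}{S^{b}},
\]
insert the above integral representation (valid almost surely since $Y_{j}>0$ implies $S>0$), and take expectation.

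Next I would interchange the expectation and the $u$-integral. Because the integrand $u^{b-1} Y_{1}^{b_{1}}\cdots Y_{a}^{b_{a}} e^{-uS}$ is nonnegative, Tonelli's theorem applies directly and no integrability hypothesis is needed. This yields
\[
\E\!\left[\eta_{1}^{b_{1}}\cdots \eta_{a}^{b_{a}}\right] \;=\; \frac{1}{\Gamma(b)}\int_{0}^{\infty} u^{b-1}\,\E\!\left[Y_{1}^{b_{1}}\cdots Y_{a}^{b_{a}}\prod_{j=1}^{N} e^{-u Y_{j}}\right]\mathrm{d}u.
\]

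Finally I would factor the expectation using the independence of the $Y_{j}$'s. Writing $\prod_{j=1}^{N} e^{-uY_{j}}$ and grouping the first $a$ factors with the $Y_{j}^{b_{j}}$ terms, independence gives
\[
\E\!\left[Y_{1}^{b_{1}}\cdots Y_{a}^{b_{a}}\prod_{j=1}^{N} e^{-u Y_{j}}\right] = \prod_{i=1}^{a} \E\!\left[Y_{i}^{b_{i}} e^{-uY_{i}}\right] \cdot \prod_{j=a+1}^{N} \E\!\left[e^{-uY_{j}}\right] = I_{b_{1}}(u)\cdots I_{b_{a}}(u)\, I_{0}(u)^{N-a},
\]
which, plugged back in, is exactly (\ref{equa.lem:integral.representation.b's.moment}).

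There is no real obstacle: the proof is a one-line application of the Gamma identity followed by Fubini/Tonelli and independence. The only subtle point worth mentioning explicitly is that the nonnegativity of $Y_{j}$ and of $u^{b-1}$ on $(0,\infty)$ makes the interchange unconditional, so the identity holds without any moment assumption on $Y_{1}$ beyond its positivity.
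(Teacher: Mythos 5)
Your proposal is correct and follows essentially the same route as the paper: the Gamma-integral representation of $S^{-b}$, an interchange of expectation and integral (the paper invokes Fubini, you rightly note Tonelli suffices by nonnegativity), and factorization by independence of the $Y_j$'s.
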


\begin{pf} For every $z\in\R_+^*$ we have the following integral representation
%
\begin{equation}
\label{equa:moment.representation.gamma} z^{-b}= \frac{1}{\Gamma(b)}\int_{0}^{\infty}
u^{b-1}e^{-u z} \,\mathrm{d} u ,
\end{equation}
then applying (\ref{equa:moment.representation.gamma}) with $z=\sum_{i=1}^{N} Y_i $ we obtain
%
\begin{eqnarray}
\label{equa:integral.representation.p.moment} \E \bigl[ \eta_1^{b_1} \cdots
\eta_a^{ b_a} \bigr] & =& \E \biggl[ Y_1^{b_1}
\cdots Y_a^{b_a} \frac{1}{\Gamma(b)} \int
_{0}^{\infty} u^{b-1} e^{-u \sum_{i=1}^{N} Y_i}
\,\mathrm{d} u \biggr]
\nonumber
\\
& = &\int_{0}^{\infty} \frac{u^{b-1}}{\Gamma(b)} \E \bigl[
Y_1^{b_1} \cdots Y_a^{b_a}
e^{-u \sum_{i=1}^{N} Y_i} \,\mathrm{d} u \bigr] \qquad\mbox{(Fubini)}
\\
& =& \int_{0}^{\infty} \frac{u^{b-1}}{\Gamma(b)} \E \bigl[
\exp ( - u Y_1 ) \bigr]^{N-a} \prod
_{i=1}^{a}\E \bigl[ Y_1^{b_i}
\exp (-u Y_1 ) \bigr] \,\mathrm{d} u.\nonumber
\end{eqnarray}
In the last equality, we used the fact that $Y_i$ are i.i.d. Hence,
from the definition of $I_{b_i}$ we obtain that (\ref
{equa:integral.representation.p.moment}) and (\ref
{equa.lem:integral.representation.b's.moment}) are equal, proving the result.
\end{pf}

It is clear that the functions $I_{p}(u)$ are decreasing and attain
their maximum at zero. Moreover, the following relation can be easily deduced
\[
\frac{\mathrm{d}^{p}}{\mathrm{d}u^{p}} I_0(u) = (-1)^{p} I_{p}
(u). %
\]

We now outline the strategy to prove Theorem~\ref
{thmm.general.criteria.independent.case}.
\begin{longlist}[1.]
\item[1.] We first obtain a precise asymptotic of $I_{p}(u)$ in the
neighborhood of zero, where $I_{p}(u)$ attains its maximum. As the
reader will see, the behavior of $I_{p}(u)$ depends on $\alpha$ and
each case will be studied separately.\vadjust{\goodbreak}
\item[2.] We show that the integral in the right-hand side of (\ref
{equa.lem:integral.representation.b's.moment}) is essentially
determined by the immediate neighborhood of zero.
\item[3.] We estimate $\E [ \eta_1^{b_1} \cdots\eta_a^{b_a}  ]$.
\item[4.] We prove Theorem~\ref{thmm.general.criteria.independent.case}
using Lemma~\ref{lem:representatation.offspring.vector.in.term.of.eta}
that relates (\ref{equa.thmm:mohle.sagitov}) with $\E [ \eta_1^{b_1}
\cdots\eta_a^{b_a}  ]$.
\end{longlist}


\begin{lem}\label{lem:asymptotics.I.p.neighborhood.of.zero.alpha} Let
$I_\cdot(u)$ be given by (\ref{equa:definition.I_p.general}).
\begin{longlist}[(a)]
\item[(a)] If $Y_i$ satisfies (\ref{equa.cond.Y.alpha}) with $\alpha= 2$
and $C=1$. Then
\begin{eqnarray*}
\label{equa:asymptotics.I.p=0.alpha=2} %
I_0 (u) &=& 1 -u \E
[Y_1 ] + o(u) \qquad \mbox{when } u \to 0^+;
\\
I_{2} (u)& =& (-2 \log u ) + o \bigl(\log\bigl(u^{-1}\bigr)
\bigr) \qquad \mbox{when } u \to0^+;
\\
I_{p} (u)& =& u^{2-p} \bigl( 2 \Gamma(p -2) \bigr) + o
\bigl(u^{2-p}\bigr)\qquad \mbox{when } p \geq3 \mbox{ and } u \to0^+.
\end{eqnarray*}

\item[(b)] When $Y_i$ satisfies (\ref{equa.cond.Y.alpha}) with $1<\alpha< 2$
and $C=1$. Then
\begin{eqnarray*}
\label{equa:asymptotics.I.p=0.1<alpha<2} %
I_0 (u)& =& 1 -u \E
[Y_1 ] + o(u)\qquad \mbox{when } u \to 0^+;
\\
I_{p} (u) &= & u^{\alpha- p} \bigl( \alpha \Gamma(p -\alpha) \bigr) +
o\bigl(u^{\alpha- p}\bigr) \qquad \mbox{when } p \geq2 \mbox{ and } u \to0^+.
\end{eqnarray*}

\item[(c)] If (\ref{equa.cond.Y.alpha}) holds with $\alpha= 1$ and $C=1$. Then
\begin{eqnarray*}
\label{equa:asymptotics.I.p=0.alpha=1} %
I_0 (u) &= &1 + ( u \log
u ) + o(u \log u),\qquad  \mbox{when } u \to0^+;
\\
I_{p} (u)& = &u^{ 1 - p} \Gamma(p - 1) + o\bigl(u^{ 1 - p}
\bigr), \qquad \mbox {when } p \geq2 \mbox{ and } u \to0^+.
\end{eqnarray*}

\item[(d)] Assume that $Y_i$ satisfies (\ref{equa.cond.Y.alpha}) with
$0<\alpha<1$ and $C=1$. Then
\begin{eqnarray*}
\label{equa:asymptotics.I.p=0.alpha<1} %
I_0 (u) &=& 1 -
u^{\alpha} \Gamma(1-\alpha) + o\bigl(u^{\alpha}\bigr)\qquad\mbox {when
} u \to0^+;
\\
I_{p} (u)&=& u^{ \alpha- p} \bigl( \alpha \Gamma(p - \alpha) \bigr)
+ o\bigl(u^{ \alpha- p}\bigr)\qquad \mbox{when } p \geq2 \mbox{ and } u \to0^+.
\end{eqnarray*}
\end{longlist}
\end{lem}

\begin{pf} See Appendix~\ref{appendix.lem:asymptotics.I.p.neighborhood.of.zero.alpha}.
\end{pf}

In the next lemma, we show that only the immediate neighborhood of zero
contributes to the integral in (\ref
{equa.lem:integral.representation.b's.moment}) of Proposition~\ref
{prop:integral.representation.b's.moment}.

\begin{lem} \label{lem:integral.upper.tail.alpha.leq.2} Let $I_\cdot
(u)$ be given by (\ref{equa:definition.I_p.general}) and $ \kappa_N :=
(\log N)^2/ N $, assume also that $Y_i$ satisfies (\ref
{equa.cond.Y.alpha}) with $\alpha\leq2$ and $C=1$. Then, for every $K
\in\N$
%
\begin{equation}
\label{equa.lem:integral.upper.tail.alpha=2} \lim_{N \to\infty} N^K \int
_{ \kappa_N}^{\infty} u^{b-1} I_0(u)^{N-a}
I_{b_1}(u) \cdots I_{b_a}(u) \,\mathrm{d} u =0,
\end{equation}
where $b_1 \geq\cdots\geq b_a$ are fixed integers and $b= b_1 +
\cdots+ b_a$. Hence, the integral in (\ref
{equa.lem:integral.upper.tail.alpha=2}) decreases faster than any
polynomial in $N$.
\end{lem}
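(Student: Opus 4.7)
The plan is to exploit the monotonicity of $I_0$ to factor out the decay of $I_0(\kappa_N)^{N-a}$ as a prefactor, then invoke Proposition \ref{prop:integral.representation.b's.moment} to control the residual integral by an $N$-independent constant, and finally use the local expansions of $I_0$ near $0^+$ from Lemma \ref{lem:asymptotics.I.p.neighborhood.of.zero.alpha} to show that the prefactor decays faster than any polynomial in $N$.

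First, since $Y_1 > 0$ almost surely, $I_0(u) = \E[e^{-uY_1}]$ is strictly decreasing and bounded by $1$ on $[0,\infty)$. Fix an integer $M \geq 1$ (independent of $N$). For all $N \geq a+M$ and $u \geq \kappa_N$, monotonicity yields
\[
I_0(u)^{N-a} \;=\; I_0(u)^{N-a-M}\, I_0(u)^M \;\leq\; I_0(\kappa_N)^{N-a-M}\, I_0(u)^M.
\]
Substituting and extending the lower endpoint from $\kappa_N$ to $0$ (all factors are non-negative),
\[
\int_{\kappa_N}^{\infty} u^{b-1} I_0(u)^{N-a} \prod_{i=1}^{a} I_{b_i}(u)\, \mathrm{d}u \;\leq\; I_0(\kappa_N)^{N-a-M} \int_{0}^{\infty} u^{b-1} I_0(u)^M \prod_{i=1}^{a} I_{b_i}(u)\, \mathrm{d}u.
\]

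Second, applying Proposition \ref{prop:integral.representation.b's.moment} to a sample of $M+a$ i.i.d. copies of $Y_1$, the remaining integral equals $\Gamma(b)\, \E\bigl[\tilde\eta_1^{b_1}\cdots\tilde\eta_a^{b_a}\bigr]$, where $\tilde\eta_i = Y_i/\sum_{j=1}^{M+a} Y_j \in [0,1]$. This quantity is bounded above by $\Gamma(b)$ and does not depend on $N$. The lemma therefore reduces to showing that $N^K I_0(\kappa_N)^{N-a-M} \to 0$ for every fixed $K$.

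Third, using $-\log(1-x) = x + o(x)$ as $x\to 0^+$ together with Lemma \ref{lem:asymptotics.I.p.neighborhood.of.zero.alpha}, we obtain a lower bound on $-\log I_0(\kappa_N)$ in each sub-case. For $1<\alpha\leq 2$, the expansion $I_0(u) = 1 - u\E[Y_1] + o(u)$ gives $-\log I_0(\kappa_N) \geq c\,\kappa_N = c (\log N)^2/N$. For $\alpha = 1$, the expansion $I_0(u) = 1 + u\log u + o(u\log u)$ gives $-\log I_0(\kappa_N) \geq c\,\kappa_N |\log\kappa_N| \sim c(\log N)^3/N$ (using $|\log\kappa_N| \sim \log N$). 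For $0<\alpha<1$, the expansion $I_0(u) = 1 - u^{\alpha}\Gamma(1-\alpha) + o(u^\alpha)$ gives $-\log I_0(\kappa_N) \geq c\,\kappa_N^{\alpha} = c(\log N)^{2\alpha}/N^{\alpha}$. Multiplying by $(N-a-M) \sim N$, the exponents $(N-a-M)(-\log I_0(\kappa_N))$ diverge like $(\log N)^2$, $(\log N)^3$, and $N^{1-\alpha}(\log N)^{2\alpha}$ respectively, each of which dominates $K\log N$ for every fixed $K$. This proves the claim.

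The main obstacle is the case-by-case control of $-\log I_0(\kappa_N)$, and in particular the $\alpha=1$ regime where $\E[Y_1]=\infty$ so that the leading correction to $I_0$ is the logarithmic term $u\log u$ rather than $u$; the choice of $\kappa_N = (\log N)^2/N$ is precisely tuned so that a genuine $(\log N)^2$-factor survives in the exponent in the worst regimes.
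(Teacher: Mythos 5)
Your proposal is correct and follows essentially the same route as the paper: factor out the decay via monotonicity of $I_0$, bound the residual integral by a constant using the Gamma-integral representation of Proposition \ref{prop:integral.representation.b's.moment}, and then expand $I_0(\kappa_N)$ case by case with Lemma \ref{lem:asymptotics.I.p.neighborhood.of.zero.alpha}. The only cosmetic difference is that you retain $M$ extra factors $I_0(u)^M$ so as to quote the proposition verbatim with $M+a$ variables, whereas the paper takes the $M=0$ variant and evaluates $\int_0^\infty u^{b-1}\prod_i I_{b_i}(u)\,\mathrm{d}u = \Gamma(b)\,\E\bigl[Y_1^{b_1}\cdots Y_a^{b_a}/(\sum_{i=1}^a Y_i)^b\bigr] \leq \Gamma(b)$ directly.
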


\begin{pf} Since $I_0$ is a decreasing function
%
\begin{eqnarray}
\label{equa:bound.cov.moments.away.from.zero.alpha=2} &&\int_{ \kappa_N}^{\infty}  u^{b-1}
I_0(u)^{N-a} I_{b_1}(u) \cdots
I_{b_a}(u) \,\mathrm{d} u
\nonumber
\\
&&\quad\leq I_0 ( \kappa_N)^{N-a} \int
_{ \kappa_N}^{\infty} u^{b-1} I_{b_1}(u)
\cdots I_{b_a}(u) \,\mathrm{d} u
\nonumber
\\[-8pt]
\\[-8pt]
\nonumber
&&\quad \leq I_0 ( \kappa_N)^{N-a} \int
_{0}^{\infty} u^{b-1} \E
\bigl[Y_1^{b_1}e^{-u Y_1} \bigr] \cdots\E \bigl[
Y_a^{b_a}e^{-u Y_a} \bigr] \,\mathrm{d} u
\\
&&\quad = I_0 ( \kappa_N )^{N-a} \Gamma(b) \E \biggl[
\frac{Y_1^{b_1} \cdots
Y_a^{b_a}}{ ( \sum_{i=1}^{a} Y_i  )^b} \biggr].\nonumber
\end{eqnarray}
In the last equality, we proceed as in Proposition~\ref
{prop:integral.representation.b's.moment} and use the integral
representation (\ref{equa:moment.representation.gamma}) with $z = \sum_{i=1}^{a} Y_i$. The expected value in the right-hand side of (\ref
{equa:bound.cov.moments.away.from.zero.alpha=2}) is bounded from above
by one. Applying Lemma~\ref
{lem:asymptotics.I.p.neighborhood.of.zero.alpha} with $u = \kappa_N \to
0^+$ as $N \to\infty$
\begin{eqnarray*} 
I_0 (
\kappa_N )^{N-a} &=& \exp \bigl\{ - \E[Y_i](\log
N)^2 + o \bigl(\log^2 N\bigr) \bigr\}\qquad \mbox{if } 1<
\alpha\leq2;
\\
I_0 ( \kappa_N )^{N-a}& =& \exp \bigl\{ - (\log
N)^3 +(\log N)^2 (\log2 \log N ) + o \bigl(
\log^3 N\bigr) \bigr\}\qquad \mbox{if } \alpha=1;
\\
I_0 ( \kappa_N )^{N-a}& =& \exp \bigl\{ -
\Gamma(1-\alpha) N^{1-\alpha} (\log N)^{2\alpha} + o
\bigl(N^{1-\alpha} (\log N)^{2\alpha}\bigr) \bigr\}\qquad \mbox{if } 0<
\alpha<1;
\end{eqnarray*}
that decreases faster than any polynomial in $N$.
\end{pf}

The $\kappa_N$ in Lemma~\ref{lem:integral.upper.tail.alpha.leq.2} is
not optimal. The reason we have chosen such $\kappa_N$ will be clear in
the proof of Proposition~\ref
{prop:integral.representation.b's.moment.assymptotic.alpha} below,
where we estimate $ \E [ \eta_1^{b_1} \cdots\eta_a^{b_a}  ]$.

\begin{prop} \label
{prop:integral.representation.b's.moment.assymptotic.alpha} Let $b_1
\geq b_2 \geq\cdots\geq b_a \geq2 $ be positive integers, $b = b_1+
\cdots+ b_a$, and $\eta_i$ be as in Proposition~\ref
{prop:integral.representation.b's.moment}.
\begin{longlist}[(a)]
\item[(a)] Suppose $Y_i$ satisfies (\ref{equa.cond.Y.alpha}) with $\alpha=2$
and $C=1$. Let $g := \max\{i;   b_i \geq3 \}$, we adopt the
convention that $\max\{\varnothing\} = 0$. Then
%
\begin{eqnarray}
\label
{prop.lem:integral.representation.b's.moment.assymptotic.alpha=2} &&\lim_{N \to\infty} \E \bigl[ \eta_1^{b_1}
\cdots\eta_a^{b_a} \bigr] \cdot\frac{ N^{2 a} }{ (\log N)^{a-g}}
\nonumber
\\[-8pt]
\\[-8pt]
\nonumber
&&\quad= \Gamma(2
a)\cdot\frac{ 2^a
  \prod_{i=1}^g \Gamma(b_i-2)}{ \Gamma(b) \E[Y_1]^{2 a} }.
\end{eqnarray}

\item[(b)] If (\ref{equa.cond.Y.alpha}) holds with $1<\alpha<2$ and $C=1$. Then
%
\begin{eqnarray}
\label
{prop.lem:integral.representation.b's.moment.assymptotic.1<alpha<2} &&\lim_{N \to\infty} \E \bigl[ \eta_1^{b_1}
\cdots\eta_a^{b_a} \bigr] N^{a \alpha}
\nonumber
\\[-8pt]
\\[-8pt]
\nonumber
&&\quad = \Gamma(a
\alpha) \cdot\frac{ \prod_{i=1}^a \alpha
\Gamma(b_i-\alpha) }{\Gamma(b) \E[Y_1]^{a \alpha}}.
\end{eqnarray}

\item[(c)] If we assume that $Y_i$ satisfies (\ref{equa.cond.Y.alpha}) with
$\alpha=1$ and $C=1$. Then
%
\begin{equation}
\label
{prop.lem:integral.representation.b's.moment.assymptotic.alpha=1} \lim_{N \to\infty} \E \bigl[ \eta_1^{b_1}
\cdots\eta_a^{b_a} \bigr] (N \log N )^{a} =
\Gamma(a) \cdot\frac{ \prod_{i=1}^a \Gamma(b_i-1)
}{\Gamma(b)}.
\end{equation}

\item[(d)] If (\ref{equa.cond.Y.alpha}) holds with $0 <\alpha<1$ and $C=1$. Then
%
\begin{equation}
\label
{prop.lem:integral.representation.b's.moment.assymptotic.alpha<1} \lim_{N \to\infty} \E \bigl[ \eta_1^{b_1}
\cdots\eta_a^{b_a} \bigr] N^{a} = \Gamma(a) \cdot
\frac{ \alpha^{a-1} \prod_{i=1}^a \Gamma
(b_i-\alpha) }{\Gamma(1-\alpha)^a \Gamma(b)}.\vspace*{1pt}
\end{equation}
\end{longlist}
\end{prop}
\begin{pf} See Appendix~\ref{appendix.prop:integral.representation.b's.moment.assymptotic.alpha}.
\end{pf}

We now compute $c_N$ the probability that two individuals randomly
chosen have the same ancestor.

\begin{cor}\label{cor.assymptotic.c.N.alpha} Assume that the hypotheses
of Theorem~\ref{thmm.general.criteria.independent.case} hold and let
$c_N$ be as in (\ref{equa:defi.cN}). Assume also that the $Y_i$'s
satisfy (\ref{equa.cond.Y.alpha}) with $\alpha\leq2$ and $C=1$. Then
%
\begin{eqnarray}
\lim_{N\to\infty} \frac{N   c_N}{\log N}& =&
\frac{ 2 }{
\E[Y_1]^2}\qquad \mbox{if } \alpha=2;\nonumber
\\[2pt]
\lim_{N\to\infty} \frac{c_N}{N^{1-\alpha}}& =&\frac{
\alpha\Gamma(\alpha) \Gamma(2- \alpha)}{ \E[Y_1]^\alpha} \qquad \mbox
{if } 1<\alpha<2;
\\[2pt]
\lim_{N\to\infty} (\log N)c_N &=& 1 \qquad \mbox{if }
\alpha=1. \nonumber\vspace*{1pt}
\end{eqnarray}
Finally, if $Y_i$ satisfies (\ref{equa.cond.Y.alpha}) with $0< \alpha<
1$ and $C=1$, then
%
\begin{equation}
\label{cor.assymptotic.c.N.alpha<1} \lim_{N \to\infty} c_N =
\frac{\Gamma(2-\alpha)}{\Gamma(1-\alpha)}.\vspace*{1pt}
\end{equation}
\end{cor}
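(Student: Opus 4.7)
The plan is to reduce this corollary to a direct application of Lemma \ref{lem:representatation.offspring.vector.in.term.of.eta} (with $a=1$, $b_1=2$) together with Proposition \ref{prop:integral.representation.b's.moment.assymptotic.alpha}. Specifically, from the definition (\ref{equa:defi.cN}) of $c_N$ and the exchangeability of the family sizes,
\begin{equation*}
c_N = \frac{1}{N-1}\E\bigl[\nu_1(\nu_1-1)\bigr] = \frac{\E[(\nu_1)_2]}{N-1},
\end{equation*}
and Lemma \ref{lem:representatation.offspring.vector.in.term.of.eta} applied with $a=1$ and $b_1=2$ gives $\E[(\nu_1)_2] = (N)_2 \E[\eta_1^2] = N(N-1)\E[\eta_1^2]$. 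Hence we obtain the clean identity $c_N = N\E[\eta_1^2]$, which reduces the problem to computing the asymptotics of $\E[\eta_1^2]$ in each of the four regimes.

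Next, I apply Proposition \ref{prop:integral.representation.b's.moment.assymptotic.alpha} with $a=1$, $b_1=b=2$. For $\alpha=2$, the quantity $g=\max\{i:b_i\geq 3\}$ is $0$, so the empty product equals $1$ and the formula (\ref{prop.lem:integral.representation.b's.moment.assymptotic.alpha=2}) yields $\E[\eta_1^2]\sim \Gamma(2)\cdot 2/(\Gamma(2)\E[Y_1]^2)\cdot \log N/N^2 = (2/\E[Y_1]^2)\log N/N^2$; multiplying by $N$ gives the stated limit. For $1<\alpha<2$, (\ref{prop.lem:integral.representation.b's.moment.assymptotic.1<alpha<2}) gives $\E[\eta_1^2]\sim \Gamma(\alpha)\cdot \alpha\Gamma(2-\alpha)/(\Gamma(2)\E[Y_1]^\alpha)\cdot N^{-\alpha}$, and multiplying by $N$ gives $c_N\sim \alpha\Gamma(\alpha)\Gamma(2-\alpha)\E[Y_1]^{-\alpha}\cdot N^{1-\alpha}$. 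The case $\alpha=1$ follows from (\ref{prop.lem:integral.representation.b's.moment.assymptotic.alpha=1}): $\E[\eta_1^2]\sim \Gamma(1)\cdot\Gamma(1)/\Gamma(2)\cdot (N\log N)^{-1} = (N\log N)^{-1}$, so $(\log N)c_N = N(\log N)\E[\eta_1^2]\to 1$. Finally, for $0<\alpha<1$, (\ref{prop.lem:integral.representation.b's.moment.assymptotic.alpha<1}) with $a=1$ (so the factor $\alpha^{a-1}=\alpha^0=1$) yields $\E[\eta_1^2]\sim \Gamma(2-\alpha)/(\Gamma(1-\alpha)\Gamma(2))\cdot N^{-1}$, whence $c_N\to \Gamma(2-\alpha)/\Gamma(1-\alpha)$.

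There is essentially no obstacle here: the corollary is a direct bookkeeping computation built on the two preceding results, and the only thing to verify is that the combinatorial constants collapse correctly in each of the four regimes (checking that $\Gamma(2)=1$, that empty products equal one when $g=0$ in the case $\alpha=2$, and that the factor $\alpha^{a-1}$ degenerates to $1$ when $a=1$). Since the hard analytic work has been done in Lemma \ref{lem:asymptotics.I.p.neighborhood.of.zero.alpha}, Lemma \ref{lem:integral.upper.tail.alpha.leq.2} and Proposition \ref{prop:integral.representation.b's.moment.assymptotic.alpha}, the corollary requires no further estimates beyond substitution and simplification.
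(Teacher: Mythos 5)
Your proposal is correct and is exactly the paper's argument: the paper's proof of this corollary is the one-line remark that it follows directly from Lemma \ref{lem:representatation.offspring.vector.in.term.of.eta} and Proposition \ref{prop:integral.representation.b's.moment.assymptotic.alpha}, and you have simply carried out the substitution $a=1$, $b_1=b=2$ (so $g=0$ when $\alpha=2$) and the resulting simplifications, all of which check out.
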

\begin{pf} It is a direct application of Lemma~\ref
{lem:representatation.offspring.vector.in.term.of.eta} and Proposition~\ref{prop:integral.representation.b's.moment.assymptotic.alpha}.
\end{pf}

\begin{pf*}{Proof of Theorem~\ref{thmm.general.criteria.independent.case} in
the cases $\alpha\leq2$} We analyze each case separately and compute
the limits
\[
\lim_{N \to\infty} \frac{\E [(\nu_1)_{b_1} \cdots(\nu_a)_{b_a}
 ]}{N^{b-a}c_N}.\vspace*{1pt} %
\]
If $\P(Y_i \geq x) \sim x^{-2}$ as $x\to\infty$, denote by $g = \max\{
i;   b_i \geq3 \}$ (as in Proposition~\ref
{prop:integral.representation.b's.moment.assymptotic.alpha}). Then, as
$N\to\infty$
\begin{eqnarray*}
&&\frac{\E [(\nu_1)_{b_1} \cdots(\nu_a)_{b_a}  ]}{N^{b-a}c_N}
\\
&&\quad = \frac{(N)_b}{N^{b-a}c_N} \cdot\E \bigl[\eta^{b_1} \cdots\eta
^{b_a} \bigr] \qquad (\mbox{Lemma~\ref
{lem:representatation.offspring.vector.in.term.of.eta}})
\\
&&\quad \sim N^a \frac{N}{ \log N} \cdot\frac{\E[Y_1]^2 }{ 2 } \cdot\E \bigl[
\eta^{b_1} \cdots\eta^{b_a} \bigr]\qquad
(\mbox{Corollary~\ref{cor.assymptotic.c.N.alpha}})\vadjust{\goodbreak}
\\
&&\quad \sim\frac{ N^{a+1}}{\log N} \cdot\frac{\E[Y_1]^2 }{2} \cdot \frac{(\log N)^{a-g}}{N^{2 a}}\cdot
\Gamma(2 a)\cdot\frac{ 2^a
\prod_{i=1}^g \Gamma(b_i-2)}{ \Gamma(b) \E[Y_1]^{2 a} }\qquad   (\mbox {Proposition~\ref
{prop:integral.representation.b's.moment.assymptotic.alpha}})
\\
& &\qquad= \frac{(\log N)^{a-g -1}}{N^{a-1}} \cdot\Gamma(2 a)\cdot\frac
{ 2^{a-1}  \prod_{i=1}^g \Gamma(b_i-2)}{ \Gamma(b) \E[Y_1]^{2 (a-1)}
},
\end{eqnarray*}
which converges to zero whenever $a \geq2$. If $a=1=g$, which implies
$b_a=b \geq3$, then
\[
\frac{\E [(\nu_1)_{b_1}  ]}{N^{b-1}c_N} \sim\frac{1}{\log N} \cdot\frac{\Gamma(b-2)}{\Gamma(b)\E[Y_1]} \to0 \qquad\mbox{as
} N\to \infty. %
\]
On the other hand, if $a=1$ and $g=0$, that is, $b=2$, then
\[
\lim_{N \to\infty} \frac{\E [(\nu_1)_{2}  ]}{N^{2-1}c_N} = 1. %
\]
Hence, in the scaling limit we may only observe collisions of two
distinct blocks that do not occur simultaneously, \textit{that is,}
Kingman's coalescent.

\emph{In the case $1<\alpha<2$ we proceed as above obtaining}
%
\[
\frac{\E [(\nu_1)_{b_1} \cdots(\nu_a)_{b_a}  ]}{N^{b-a}c_N} \sim\frac{ \Gamma(\alpha a)}{N^{(a-1)(\alpha-1)}} \cdot\frac{ \E
[Y_1]^\alpha}{ \alpha\Gamma(\alpha) \Gamma(2- \alpha) } \cdot
\frac{
 \prod_{i=1}^a \alpha\Gamma(b_i-\alpha)}{ \Gamma(b) \E[Y_1]^{\alpha
a} } \qquad \mbox{as } N\to\infty,
\]
that converges to zero whenever $a \geq2$. If $a=1$ and \textit{a
fortiori} $b_a=b $
\begin{eqnarray*}
\lim_{N \to\infty} \frac{\E [(\nu_1)_{b}  ]}{N^{b-1}c_N} &=& \frac{\Gamma(b-\alpha)}{\Gamma(b) \Gamma(2-\alpha)}
\\
& =& \frac{(b-1-\alpha)\cdots(2-\alpha)}{(b-1)!}
\\
& =& \frac{B(b-\alpha,\alpha)}{B(2-\alpha, \alpha)} = \lambda_{b;b} ,
\end{eqnarray*}
where $B(c,d)= \Gamma(c) \Gamma(d)/ \Gamma(c+d)$, as defined in Theorem~\ref{thmm.general.criteria.independent.case}. Hence, using the
recursive formula (\ref{equa.recursion.lambda}) for $\lambda_{b;k}$
\begin{eqnarray*}
\lambda_{b;b-1;1} & =& \lambda_{b-1,b-1} -\lambda_{b,b}
\\
& = &\frac{\Gamma(b-1-\alpha)}{\Gamma(b-1) \Gamma(2-\alpha)} - \frac
{\Gamma(b-\alpha)}{\Gamma(b) \Gamma(2-\alpha)}
\\
& =&\frac{\alpha}{b-1} \cdot\frac{\Gamma(b-1-\alpha)}{\Gamma(b-1)
\Gamma(2-\alpha)}
\\
& = &\frac{B(b-1-\alpha,1+ \alpha)}{B(2-\alpha, \alpha)} = \lambda_{b;b-1}.
\end{eqnarray*}
We may proceed by recurrence and conclude the convergence to the
Beta-coalescent.

\emph{In the case $\alpha=1$, we have that}
%
\[
\frac{\E [(\nu_1)_{b_1} \cdots(\nu_a)_{b_a}  ]}{N^{b-a}c_N} \sim\frac{ \Gamma( a) }{(\log N )^{a-1}} \cdot \frac{ \prod_{i=1}^a
\Gamma(b_i-1)}{ \Gamma(b) }\qquad \mbox{as } N
\to\infty,
\]
that converges to zero whenever $a \geq2$, implying that we do not
observe simultaneous collisions in the time scale. If $a=1$ and \textit{a
fortiori} $b_a=b $
\[
\lim_{N \to\infty} \frac{\E [(\nu_1)_{b}  ]}{N^{b-1}c_N} = \frac{\Gamma(b-1)}{\Gamma(b)}
\\
 = \frac{1}{b-1} = \int_{[0,1]} x^{b-2}
\,\mathrm{d} x.
\]
Hence, using the recursive formula (\ref{equa.recursion.lambda}) for
$\lambda_{b;k}$, we can conclude the convergence in distribution to the
Bolthausen--Sznitman coalescent.

\emph{When $\alpha<1$, by Corollary~\textup{\ref{cor.assymptotic.c.N.alpha}}
$\lim c_N >0$.} Then, as $ N\to\infty$
\begin{eqnarray}
\label{equa.alpha<1.Schweinsberg} \frac{\E [(\nu_1)_{b_1} \cdots(\nu_a)_{b_a}  ]}{N^{b-a}} & = &\frac{(N)_b}{N^{b-a}} \cdot\E \bigl[
\eta^{b_1} \cdots\eta^{b_a} \bigr] \qquad(\mbox{Lemma~\ref
{lem:representatation.offspring.vector.in.term.of.eta}})
\nonumber
\\
& \sim&\Gamma(a) \cdot\frac{ \alpha^{a-1} \prod_{i=1}^a \Gamma
(b_i-\alpha) }{\Gamma(1-\alpha)^a \Gamma(b)} \qquad (\mbox{Proposition~\ref{prop:integral.representation.b's.moment.assymptotic.alpha}})
\nonumber
\\[-8pt]
\\[-8pt]
\nonumber
& =& \frac{\alpha^{a-1} (a-1)!}{(b-1)!} \cdot\prod\frac{\Gamma
(b_i-\alpha)}{\Gamma(1-\alpha)}
\\
& =& \frac{\alpha^{a-1} (a-1)!}{(b-1)!} \cdot\prod[1-\alpha]_{b_i-1;
1}, \nonumber
\end{eqnarray}
where $[x]_{m,y} :=x (x+y)\cdots(x+(m-1)y)$. We finish the proof by
observing that the limit in (\ref{equa.alpha<1.Schweinsberg}) is
exactly the same limit that Schweinsberg obtains when studying
coalescent processes that govern the genealogical trees of
supercritical Galton--Watson processes with selection; see Section~4 of
\cite{Schweinsberg2003}. 
\end{pf*}

\begin{appendix}\label{app}


\section{Proof of Lemma \texorpdfstring{\protect\ref
{lem:asymptotics.I.p.neighborhood.of.zero.alpha}}{4.5}}\label
{appendix.lem:asymptotics.I.p.neighborhood.of.zero.alpha}

In this appendix, we present the proof of Lemma~\ref
{lem:asymptotics.I.p.neighborhood.of.zero.alpha}. We first prove the
expansion of $I_0(u)$ and then of $I_{p}(u)$ for $p \geq2$. The idea
of the proof is more or less the same for every $0<\alpha\leq2$, but
some technical adaptations are required in specific cases.

The Laplace transform $I_0$ of $Y_i$ is differentiable, when $1<\alpha
\leq2$ and $I'_0(0) = \E[Y_i]$, then in this case, the expansion of
$I_0(u)$ is obtained by a simple Taylor development at zero. For
$\alpha\leq1$, the Laplace transform of $Y_1$ is no longer
differentiable at zero. On the other hand, we have that
%
\begin{eqnarray}
\label{equa.I.0.important.part.alpha.leq.1} \E \bigl[ e^{-u Y_1} \bigr] &= &\int_0^\infty
e^{-x} \P ( Y_1 \leq x/u ) \,\mathrm{d} x
\nonumber
\\[-8pt]
\\[-8pt]
\nonumber
& =& 1 - \int_{0}^{c(u)}
e^{-x} \P ( Y_1 \geq x /u ) \,\mathrm{d} x - \int
_{c(u)}^{\infty} e^{-x} \P ( Y_1
\geq x /u ) \,\mathrm {d} x,
\end{eqnarray}
where $c(u)$ is a function depending on $u$ to be chosen. Let $c(u) = u
\log\log(u^{-1})$, then
\[
\frac{x}{u} \geq\log\log\bigl(u^{-1}\bigr)\qquad \mbox{if } x \geq
c(u); %
\]
that diverges if $u \to0^+$. It is also trivial that $c(u) =o(u^\alpha
)$ (in the case $\alpha<1$) and $c(u)= o (u \log u) $ (in the case
$\alpha= 1$) as $u \to0^+$. Hence, we can easily bound the first term
in (\ref{equa.I.0.important.part.alpha.leq.1}) by
\[
\int_{0}^{c(u)} e^{-x} \P (
Y_1 \geq x /u ) \,\mathrm{d} x \leq c(u), %
\]
that it is negligible as $u \to0^+$. We study the second term in (\ref
{equa.I.0.important.part.alpha.leq.1}), since $x/u$ diverges if $x \geq
c(u)$, we can replace $\P(Y_i \geq x /u ) $ by its asymptotic
equivalent $ u^\alpha/ x^\alpha$
\[
\int_{c(u)}^{\infty} e^{-x} \P (
Y_1 \geq x /u ) \,\mathrm{d} x \sim u^{\alpha} \int
_{c(u)}^{\infty} \frac{e^{-x}}{x^\alpha} \,\mathrm{d} x\qquad \mbox{as
} u \to0^+. %
\]
When $\alpha< 1$, we have that $\int_{c(u)}^{\infty} \frac
{e^{-x}}{x^\alpha} \,\mathrm{d} x \to\Gamma(1-\alpha) < \infty$, that
proves the statement in this case. For $\alpha= 1$, we use the
following result, that may be found in \cite{Bender1999} (Section~6.2,
Example~4):
%
\begin{equation}
\label{equa:develpoment.incomplete.gamma.function.zero} \int_{z}^{ \infty} \frac{ e^{- x} }{x}
\,\mathrm{d} x = -\gamma- \log z - \sum_{m \geq1}
(-1)^m \frac{z^{m}}{m(m!)} , \qquad z \to0^+,
\end{equation}
where $\gamma$ stands for the Euler--Mascheroni constant. Taking $z=
c(u)$, we obtain that
\begin{eqnarray*}
\int_{c(u)}^{\infty} \frac{e^{-x}}{x} \,\mathrm{d} x &
= &-\gamma- \log \bigl( u \log\log\bigl(u^{-1}\bigr) \bigr) - \sum
_{m \geq1} (-1)^m \frac{  (u
\log\log(u^{-1})  )^{m}}{m(m!)}
\\
& =& -\log u + o(\log u) \qquad\mbox{as } u \to0^+,
\end{eqnarray*}
finishing the proof.

\emph{We now focus on the case $p \geq2$}. We start with the following
relation:
%
\begin{eqnarray}
I_{p} (u) & =& \int_{0}^{ \infty} \bigl(p
x^{p-1} e^{-u x} -u x^{p} e^{-u x} \bigr)
\P(Y_i \geq x) \,\mathrm{d} x
\nonumber
\\
& = &\int_{0}^{c(u)} \bigl(p
u^{-p} x^{p-1} e^{- x} - u^{-p}
x^{p} e^{-
x} \bigr) \P(Y_i \geq x /u )
\,\mathrm{d} x \label
{equa.first.integral.I.p.alpha=2}
\\
&&{}+ \int_{c(u)}^{ \infty} \bigl(p u^{-p}
x^{p-1} e^{- x} - u^{-p} x^{p}
e^{- x} \bigr) \P(Y_i \geq x /u ) \,\mathrm{d} x,
\label
{equa.second.integral.I.p.alpha=2}
\end{eqnarray}
where $c(u)$ is a function depending on $u$ to be chosen. As we did
above, we will choose $c(u)$ such that it is negligible in comparison
to $u^{\alpha-p}$, but $x/u$ diverges if $x \geq c(u)$.

\emph{Suppose that $\alpha<2$ or $\alpha=2$ and $p \geq3$}. Let
$\beta\in \,]0,1[$ such that $\beta p > \alpha$ and choose $c(u)=
u^\beta$ (it is trivial that such $\beta$ does not exist if $p=\alpha
=2$). We bound (\ref{equa.first.integral.I.p.alpha=2}) by
\begin{eqnarray*}
&&\biggl| \int_{0}^{c(u)}  \bigl( p u^{-p}
x^{p-1} e^{- x} - u^{-p} x^{p}
e^{- x} \bigr) \P(Y_i \geq x /u ) \,\mathrm{d} x \biggr|
\\
&&\quad \leq u^{ p} \int_{0}^{c(u)} p
u^{-p} x^{p-1} + u^{-p} x^{p} \mathrm {d}
x
\nonumber
\\
&&\quad = u^{(\beta+1) p} + \frac{ u^{ (\beta+1)p + 1} }{p+1} ,
\end{eqnarray*}
that is negligible in comparison to $u^{\alpha-p}$ as $u\to0^+$. We
now turn our attention to (\ref{equa.second.integral.I.p.alpha=2}),
where $x/u $ diverges as $u\to0^+$. We may replace $\P(Y_i \geq x /u )
$ by its asymptotic equivalent $ u^\alpha/ x^\alpha$, then as $u\to0^+$
%
\begin{eqnarray}
\label{equa.estimation.second.integral.I.p.alpha=2} &&\int_{c(u)}^{ \infty}  \bigl(p
u^{-p} x^{p-1} e^{- x} - u^{-p}
x^{p} e^{- x} \bigr) \P(Y_i \geq x /u )
\,\mathrm{d} x
\nonumber
\\
&&\quad\sim u^{\alpha-p } \int_{c(u)}^{ \infty} \bigl(p
x^{p-\alpha-1} e^{-
x} - x^{p-\alpha} e^{- x} \bigr)
\,\mathrm{d} x
\\
&&\quad = u^{\alpha-p } \alpha \Gamma(p-\alpha) - u^{\alpha-p } \int
_{0}^{c(u)} \bigl(p x^{p-\alpha-1}
e^{- x} - x^{p-\alpha} e^{- x} \bigr) \,\mathrm{d} x.\nonumber
\end{eqnarray}
Finally, the second term in the right-hand side of (\ref
{equa.estimation.second.integral.I.p.alpha=2}) is $o (u^{\alpha-p} )$
as $u\to0^+$, concluding the proof in the cases $\alpha<2$ and $\alpha
=2$, with $ p \geq2 $.

\emph{The case $p=2$ and $\alpha=2$ is obtained as above, choosing
$c(u) = u \log\log(u^{-1})$ and using the asymptotic development \textup{(\ref
{equa:develpoment.incomplete.gamma.function.zero})}. We leave the
details to the reader.} 

\section{Proof of Proposition \texorpdfstring{\protect\ref
{prop:integral.representation.b's.moment.assymptotic.alpha}}{4.7}}\label
{appendix.prop:integral.representation.b's.moment.assymptotic.alpha}

In this appendix, we prove Proposition~\ref
{prop:integral.representation.b's.moment.assymptotic.alpha}. Once more,
the main idea of the proof is roughly the same for every $0<\alpha
\leq2$, but some technical adaptations are required in specific cases.
For this reason, we will present a detailed proof of the case $\alpha=
2$ and only sketch the proofs of the other cases.

Let $\kappa_N = (\log N)^2 / N$ be as in Lemma~\ref
{lem:integral.upper.tail.alpha.leq.2}. By (\ref
{equa.lem:integral.representation.b's.moment}) and Lemma~\ref
{lem:integral.upper.tail.alpha.leq.2}, we have that
\[
\E \bigl[ \eta_1^{b_1} \cdots\eta_a^{b_a}
\bigr] = \frac{1}{\Gamma(b)} \int_0^{\kappa_N}
u^{b-1} I_0(u)^{N-a} I_{b_1}(u) \cdots
I_{b_a}(u) \,\mathrm{d} u + \varepsilon_N, %
\]
where $\varepsilon_N$ decreases to zero faster than any polynomial in $N$.
Hence, it suffices to show that
%
\begin{equation}
\label
{important.part.integral.representation.asymptotic.N.alpha.leq.2} \lim_{N \to\infty}\frac{ N^{2 a} }{ (\log N)^{a-g}} \cdot\int
_0^{\kappa_N} u^{b-1} I_0(u)^{N-a}
I_{b_1}(u) \cdots I_{b_a}(u) \,\mathrm {d} u =
\frac{ 2^a   \prod_{i=1}^g \Gamma(b_i-2)}{ \E[Y_1]^{2 a} } \cdot\Gamma(2 a).
\end{equation}

Let $\varepsilon> 0 $, since $\lim_{N\to\infty} \kappa_N = 0$ we
apply Lemma~\ref{lem:asymptotics.I.p.neighborhood.of.zero.alpha} to
conclude that there exists a $N_0$ such that for $N$ larger than $N_0$
and $u \leq\kappa_N$
\begin{eqnarray*} (1-\varepsilon) \bigl(2 \Gamma(b_i-2)
\bigr) &\leq& I_{b_{i}} (u) / u^{2
- b_i} \leq(1+\varepsilon) \bigl(2
\Gamma(b_i-2) \bigr)\qquad \mbox{ if }  b_i \geq 3;
\\
2 (1-\varepsilon)& \leq& I_{2 }(u) / \log\bigl(u^{-1}\bigr)
\leq2 (1+\varepsilon)\qquad \mbox{if }  b_i = 2.
 \end{eqnarray*}
Since there are finitely many $b_i$'s, we may take $N_0$ such that the
inequalities hold for every $i \in\{ 1 ,2, \ldots, a \}$. As a
consequence, for $N > N_0$
%
\begin{eqnarray}
\label{equa.I_p.alpha=2.important.part.assymptotic} &&\int_0^{\kappa_N} u^{b-1}
I_0(u)^{N-a} I_{b_1}(u) \cdots
I_{b_a}(u) \,\mathrm{d} u
\nonumber
\\
&&\quad\geq(1-\varepsilon)^a 2^a \prod
_{i=1}^g \Gamma(b_i-2) \int
_0^{\kappa
_N} u^{b-b_1-\cdots-b_g-1+2 g} \bigl(\log
\bigl(u^{-1}\bigr) \bigr)^{a-g} I_0(u)^{N-a}
\,\mathrm{d} u
\\
&&\quad= (1-\varepsilon)^a 2^a \prod
_{i=1}^g \Gamma(b_i-2) \int
_0^{\kappa_N} u^{2 a-1} \bigl(\log
\bigl(u^{-1}\bigr) \bigr)^{a-g} I_0(u)^{N-a}
\,\mathrm{d} u,\nonumber
\end{eqnarray}
where we used $b= b_1 + \cdots+ b_a = b_1 + \cdots+ b_g + 2 (a-g) $
(a similar argument may be used to obtain a similar upper bound).
Applying Lemma~\ref{lem:asymptotics.I.p.neighborhood.of.zero.alpha} for
$I_0$, we get that
\[
\lim_{u \to0^+} \frac{I_0(u) -1}{-u \E[Y_1]} = 1. %
\]
Hence, there exists a $N_1$ such that for $N \geq N_1$ and $u \leq
\kappa_N$ (we assume that $N_1 \geq N_0$)
\[
\bigl(1 - u (1+\varepsilon) \E[Y_1] \bigr)^{N-a} \leq
I_0(u)^{N-a} \leq\bigl(1 - u (1-\varepsilon)
\E[Y_1] \bigr)^{N-a}. %
\]
Applying the above inequality in (\ref
{equa.I_p.alpha=2.important.part.assymptotic}) to obtain a lower bound,
and by the change of variables $v = u (1+\varepsilon) \E[Y_1] N$ we get
\begin{eqnarray*}
&&(1- \varepsilon)^a 2^a \prod
_{i=1}^g \Gamma(b_i-2) \int
_0^{\kappa_N} u^{2 a-1} \bigl(\log
\bigl(u^{-1}\bigr) \bigr)^{a-g} I_0(u)^{N-a}
\,\mathrm{d} u
\\
&&\quad \geq\frac{(1-\varepsilon)^a}{(1+\varepsilon)^{2 a}} \cdot\frac
{1}{N^{2 a}} \cdot\frac{ 2^a \cdot\prod_{i=1}^g \Gamma(b_i-2)}{ \E
[Y_1]^{2 a} }
\\
&&\qquad {}\times \int_0^{\gamma_N} v^{2a-1} \biggl(-
\log \biggl( \frac{ v}{N (1+\varepsilon) \E[Y_1]} \biggr) \biggr)^{a-g} \biggl(1-
\frac
{v}{N} \biggr)^{N-a} \,\mathrm{d} v,
\end{eqnarray*}
where $\gamma_N = N (1+\varepsilon) \E[Y_1] \kappa_N$, then
\[
-\log \bigl( v/ \bigl(N (1+\varepsilon) \E[Y_1] \bigr) \bigr) =
\log N \biggl( 1+ \frac{\log ((1+\varepsilon) \E[Y_1]  ) - \log v}{\log
N} \biggr), %
\]
and for $v \leq (1+\varepsilon) \E[Y_1] (\log N)^2 =\gamma_N$
%
\begin{equation}
\label{kappa.N.objective} \frac{ | \log ((1+\varepsilon) \E[Y_1] ) - \log v
|}{\log N} \to0 \qquad\mbox{as } N \to\infty.
\end{equation}
Moreover, (\ref{kappa.N.objective}) decays uniformly to zero for $v
\leq\gamma_N$. \emph{We bring to the reader's attention the choice of
$\kappa_N$ in Lemma~\textup{\ref{lem:integral.upper.tail.alpha.leq.2}}, because
it was chosen such that \textup{(\ref{kappa.N.objective})} decays to zero
uniformly}. Then there exists a $N_2$ such that for $N \geq N_2$ (we
assume that $N_2 \geq N_1$)
\[
(1-\varepsilon) \log N \leq-\log \bigl( v/ \bigl(N (1+\varepsilon) \E
[Y_1] \bigr) \bigr) \leq (1+\varepsilon) \log N\qquad \mbox{for every }
v \leq\gamma_N. %
\]
Then, for $N \geq N_2$ we may further bound (\ref
{equa.I_p.alpha=2.important.part.assymptotic}) and obtain
%
\begin{eqnarray}
\label{equa.final.lower.bound} &&\int_0^{\kappa_N} u^{b-1}
I_0(u)^{N-a} I_{b_1}(u) \cdots
I_{b_a}(u) \,\mathrm{d} u
\nonumber
\\[-8pt]
\\[-8pt]
\nonumber
&&\quad\geq\frac{(1-\varepsilon)^{2a-g}}{(1+\varepsilon)^{2 a}} \cdot\frac
{(\log N)^{a-g}}{N^{2 a}} \cdot \frac{ 2^a   \prod_{i=1}^g \Gamma
(b_i-2)}{ \E[Y_1]^{2 a} }
\cdot\int_0^{\gamma_N} v^{2 a-1} \biggl(1-
\frac{v}{N} \biggr)^{N-a} \,\mathrm{d} v.
\end{eqnarray}
Since $v \leq\gamma_N$, both $v/N$ and $v^2/N$ decay to zero as $N\to
\infty$. We also have that
\[
\biggl(1-\frac{v}{N} \biggr)^{N-a} = \exp \bigl( -v +
\mathcal{O} \bigl(v^2/N \bigr) \bigr)\qquad \mbox{as } N \to\infty.
\]
As a consequence, the following limit holds:
\[
\lim_{N\to\infty} \int_0^{\gamma_N}
v^{2 a-1} \biggl(1-\frac{v}{N} \biggr)^{N-a} \,\mathrm{d}
v = \Gamma(2 a). %
\]
Since $\varepsilon$ in (\ref{equa.final.lower.bound}) is arbitrary, we
have that
\[
\liminf_{N \to\infty} \E \bigl[ \eta_1^{b_1}
\cdots\eta_a^{b_a} \bigr] \cdot\frac{ N^{2 a} }{ (\log N)^{a-g}} \geq
\frac{ 2^a   \prod_{i=1}^g \Gamma(b_i-2)}{ \E[Y_1]^{2 a} } \cdot\Gamma(2 a). %
\]
We obtain an upper bound for the $\limsup$ using a similar argument
with the obvious changes, and we leave the details to the reader.
Hence, the limit in (\ref
{important.part.integral.representation.asymptotic.N.alpha.leq.2})
holds, which proves the statement.

We now sketch the proof of Proposition~\ref
{prop:integral.representation.b's.moment.assymptotic.alpha} in the
remaining cases ($\alpha<2$), and we explain briefly how to overcome
possible difficulties. \emph{The case $1<\alpha<2$ has no further
difficulties and we leave the details of the proof to the reader. In
the case $\alpha=1$, the relevant term to estimate is of the form}:
\[
\Gamma(b_1-1) \cdots\Gamma(b_a-1) \cdot\int
_0^{\kappa_N} u^{b-1} I_0(u)^{N-a}
u^{1-b_i} \cdots u^{1-b_a} \,\mathrm{d} u. %
\]
By Lemma~\ref{lem:asymptotics.I.p.neighborhood.of.zero.alpha},
$I_0(u)^{N-a} \cong (1+u \log u)^{N-a}$. Then, by the change of
variables $v = u N \log N$, we obtain an expression of the form:
\[
\frac{\prod\Gamma(b_i-1)}{ (N\log N)^a } \cdot \int_0^{\kappa_N   N
\log N }
v^{a-1} \biggl( 1 + \frac{v}{N \log N} \log\frac{v}{N \log N}
\biggr)^{N-a} \,\mathrm{d} v. %
\]
Since $v \leq\kappa_N   N \log N = (\log N )^3$, the equation inside
of the parentheses has the following asymptotic behavior as $N\to\infty$:
\begin{eqnarray*}
1 + \frac{v}{N \log N} \log \biggl( \frac{v}{N \log N} \biggr) & =& 1 -
\frac{v}{N} \cdot \biggl(1 + \frac{\log\log N - \log v}{\log N} \biggr)
\\
& \cong& 1 - \frac{v}{N},
\end{eqnarray*}
then we may proceed as in the case $\alpha=2$ to prove the statement.
\emph{In the case $\alpha<1$, we will arrive to an equation of the form}
\[
\prod\alpha\Gamma(b_i-\alpha) \int_0^{\kappa_N}
u^{a \alpha-1} I_0(u)^{N-a} \,\mathrm{d} u. %
\]
We then use the development of $I_0(u)$ in a neighborhood of zero and
the change of variables $v = u^{\alpha} \Gamma(1-\alpha) N$ to obtain
\[
\frac{\prod\alpha\Gamma(b_i-\alpha)}{\alpha\Gamma(1-\alpha)^a N^a} \int_0^{\kappa_N^{\alpha} \Gamma(1-\alpha) N }
v^{a-1} \biggl( 1-\frac
{v}{N} \biggr)^{N-a} \,\mathrm{d}
v, %
\]
that finishes the proof. 
\end{appendix}

\section*{Acknowledgments}
I would like to thank my supervisor, Professor Francis Comets, for
suggesting this problem, for the helpful discussions and for his
patient guidance.


%





\printhistory
\end{document}